\def \C {{\mathbb C}}
\def \N {{\mathbb N}}
\def \R {{\mathbb R}}
\def \C {{\mathbb C}}
\def \i {{\rm i}}
\def \J {{\mathcal J}}
\def \N {{\mathbb N}}
\def \M {{\mathcal M}}
\def \R {{\mathbb R}}
\def \lcm {{\rm lcm}}
\def \gcd {{\rm gcd}}
\def \d {\,{\rm d}}
\def\re{{\Re e\,}}
\def\im{{\Im m\,}}
\def\geq{\geqslant}
\def\le{\leqslant}
\def\ge{\geqslant}
\theoremstyle{plain}
\newtheorem{theorem}{Theorem}[section]
\newtheorem{proposition}{Proposition}[section]
\newtheorem{lemma}[proposition]{Lemma}
\newtheorem{corollary}[theorem]{Corollary}
\theoremstyle{remark}
\numberwithin{equation}{section}
\numberwithin{equation}{section}
\begin{document}
	
	\title[]
	{On large values of $|\zeta(\sigma+{\rm i}t)|$}
	\author[Zikang Dong]{Zikang Dong}
	\address{%
		CNRS LAMA 8050\\
		Laboratoire d'analyse et de math\'ematiques appliqu\'ees\\
		Universit\'e Paris-Est Cr\'eteil\\
		61 avenue du G\'en\'eral de Gaulle\\
		94010 Cr\'eteil Cedex\\
		France
	}
	\email{zikangdong@gmail.com}
	\author[Bin Wei]{Bin Wei}
	\address{Center for Applied Mathematics, Tianjin University, Tianjin 300072, P.R. China}
	\email{bwei@tju.edu.cn}

	\date{\today}
	
	\subjclass[2020]{11M06, 11N37}
	\keywords{Extreme values,
		GCD sums,
		Riemann zeta function}
	
	\begin {abstract}
	In this article, we investigate the extreme values of the Riemann zeta function $\zeta(s)$.
	On the 1-line, we obtain a lower bound evaluation
	$$\max_{t\in[T^{\beta},T]}|\zeta(1+\i t)|\ge {\rm e}^\gamma(\log_2T+\log_3T+c),$$
	with an effective constant $c$ which improves the result of Aistleitner, Mahatab and Munsch.
	When $\sigma\searrow 1/2$, we get an improved $c(\sigma)$ in the evaluation
	$$\max_{t\in[T^{\beta},T]}\log|\zeta(\sigma+\i t)|\ge c(\sigma)\frac{(\log T)^{1-\sigma}}{(\log_2T)^\sigma},$$
	which improves the result of Bondarenko and Seip.
    This is based on an improved lower bound of G\'al-type sums.
\end{abstract}

\maketitle

\section{Introduction}

In this article, we investigate the extreme values of the Riemann zeta function $\zeta(s)$ in the strip $1/2<\re s\le1$. The study of the values on the 1-line can date back to 1925 when Littlewood \cite{Li25} showed that there exists arbitrarily large $t$ for which
\begin{align*}
	|\zeta(1+{\rm i}t)|\geq\{1+o(1)\}{\rm e}^\gamma\log_2t .
\end{align*}
Here and throughout, we denote by $\log_j$ the $j$-th iterated logarithm and by $\gamma$ the Euler constant.
This was improved by Levinson \cite{Le72}, who in 1972 proved that there exists arbitrarily large $t$ such that
$$ |\zeta(1+{\rm i}t)|\geq{\rm e}^\gamma\log_2t +O(1).$$
In 2006, Granville and Soundararajan \cite{GS06} used Diophantine approximation to prove that the lower bound
$$\max_{t\in[1,T]}|\zeta(1+{\rm i}t)|\geq{\rm e}^\gamma(\log_2T+\log_3T-\log_4T+O(1))$$
holds for sufficiently large $T$.
Based on a distribution function which exhibits slightly smaller values, they also proposed a strong conjecture that
\begin{align}\label{conj}
	\max_{t\in[T,2T]}|\zeta(1+{\rm i}t)|={\rm e}^\gamma(\log_2T+\log_3T+C_0+1-\log2+o(1)),
\end{align}
for some constant $C_0=-0.3953997$, which provides a precise description of the extreme values.
For the upper bound, the best unconditional result is established by Vinogradov \cite{Vi58} who proved that
$$|\zeta(1+\i t)|\ll(\log t)^{2/3}.$$

In recent years, the resonance method has been extensively developed, which can detect extreme values of the Riemann zeta function more effectively. It was first used by Voronin \cite{Vo88} in 1988, and developed by Soundararajan \cite{S08} in 2008 and Hilberdink \cite{Hil09} in 2009 separately. In 2018, Aistleitner, Mahatab and Munsch \cite {Ai19} used a variant ``long resonance'' to show that
\begin{equation}\label{result:AMM}
\max_{t\in[\sqrt T,T]}|\zeta(1+{\rm i}t)|\geq{\rm e}^\gamma(\log_2T+\log_3T+O(1)).
\end{equation}
Note that this requires a lager range $[\sqrt T,T]$ {than $[T, 2T]$ in \eqref{conj},
which} is typical in the application of ``long resonance''.
Their method can also apply to a class of generalized $L$-functions (see \cite{dix}).
Inspired by their work, the aim of this article is to get an improved lower bound of large values, which presents an explicit description of the error term $O(1)$ .
For any $\beta\in(0,1)$, we define
$$Z_\beta(\sigma,T):=\max_{ T^\beta\le|t|\le T}|\zeta(\sigma+\i t)|.$$
Then {we} have the following theorem.

\begin{theorem}\label{th1.4}
Let $0<\beta<1$ be fixed and $c$ be a constant such that $c<\log(1-\beta)-\log_24-1$.
Then we have
$$
Z_\beta(1,T)\ge{\rm e}^\gamma(\log_2T+\log_3T+c)
$$
for sufficiently large $T$, where $\gamma$ is the Euler constant.
\end{theorem}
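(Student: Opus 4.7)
The plan is to apply the long resonance method of Aistleitner--Mahatab--Munsch \cite{Ai19}, but to extract an explicit additive constant in the error term by feeding in an improved lower bound for the associated G\'al-type sum --- the new ingredient announced in the abstract.

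I would introduce a resonator
$$R(t)=\sum_{n\in\mathcal{M}}r(n)n^{-\i t},$$
with $\mathcal{M}$ a finite set of squarefree integers whose prime factors lie in a narrow dyadic window $[P_0,P_1]$ to be chosen, and $r\ge 0$ multiplicative. This structural restriction keeps $|\log(m/n)|$ bounded below for distinct $m,n\in\mathcal{M}$, which is precisely what makes the long interval $[T^\beta,T]$ admissible in place of a short dyadic interval, and it allows $N:=\max\mathcal{M}$ to be taken with $\log N$ of order $(1-\beta)\log T$. This calibration of $N$ against $T$ is the source of the term $\log(1-\beta)$ in the final constant, since then $\log_2 N=\log_2 T+\log(1-\beta)+o(1)$ while $\log_3 N=\log_3 T+o(1)$.

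Starting from
$$Z_\beta(1,T)\ge\left|\frac{\int_{T^\beta}^T\zeta(1+\i t)|R(t)|^2\,\d t}{\int_{T^\beta}^T|R(t)|^2\,\d t}\right|,$$
I would approximate $\zeta(1+\i t)$ by a truncated Dirichlet series $\sum_{k\le K}k^{-1-\i t}$ (with $K=T^A$ for some fixed $A>0$), the tail being absorbed via Vinogradov's bound $|\zeta(1+\i t)|\ll(\log t)^{2/3}$. Expanding $|R(t)|^2=\sum_{m,n}r(m)r(n)(n/m)^{\i t}$ and integrating in $t$ splits the triple sum in $(k,m,n)$ into a diagonal piece, consisting of the indices with $n=km$, and an off-diagonal remainder bounded by $\sum_{m\ne n}r(m)r(n)/|\log(m/n)|$; the narrow-window choice of $\mathcal{M}$ makes this remainder negligible relative to $(T-T^\beta)\sum_n r(n)^2$. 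The surviving diagonal is a G\'al-type quotient $\mathcal{G}(\mathcal{M};r)$ which, when $r$ is multiplicative, factors as a finite Euler product over the primes $p\in[P_0,P_1]$.

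The crux is then to invoke the improved G\'al-type lower bound: with the optimal prime-wise choice of $r(p)$ one should obtain
$$\mathcal{G}(\mathcal{M};r)\ge{\rm e}^\gamma\bigl(\log_2 N+\log_3 N-\log_2 4-1+o(1)\bigr),$$
and combining this with the relations of the first paragraph produces $Z_\beta(1,T)\ge{\rm e}^\gamma(\log_2 T+\log_3 T+c+o(1))$ for any $c<\log(1-\beta)-\log_2 4-1$, which is the claim. The principal obstacle is the improved G\'al-type bound itself: this is the genuinely new technical ingredient and requires a careful multivariate optimization of the Euler product under the constraint that $\log N$ not exceed $(1-\beta+o(1))\log T$. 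Once that is in hand, the long-resonance template of \cite{Ai19} runs through with the extra bookkeeping needed to keep every error term uniformly below any prescribed $\varepsilon$ and to track the explicit constant $\log(1-\beta)-\log_2 4-1$.
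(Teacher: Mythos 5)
Your high-level framing misidentifies where the explicit constant comes from, and as a result the critical step is deferred to a claim that is neither stated nor proved in the paper. The improved G\'al-type bound (Theorem~\ref{th241}) is used only for the strip result (Theorem~\ref{th3} and its corollary), not for the $1$-line result. Theorem~\ref{th1.4} is proved by a direct computation with an Euler-product resonator, with no G\'al-type lemma invoked at all. You assert a bound of the form $\mathcal{G}(\mathcal{M};r)\ge {\rm e}^\gamma(\log_2N+\log_3N-\log_24-1+o(1))$ for some multiplicative resonator on a narrow-window squarefree set; but no such statement appears in the paper, and you give no argument for it. Since this purported bound already contains the specific constants $\log_24$ and $1$ that you are trying to establish, the proposal is essentially circular: the entire content of the theorem has been pushed into an unproven black box.

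Concretely, the paper's proof takes $a_p = 1-p/X$ for $p\le X$ with $X=B\log T\log_2T$, so the resonator $R(t)=\prod_{p\le X}(1-a_pp^{\rm i t})^{-1}$ is supported on all $X$-friable numbers (not a finite set of squarefree integers in a narrow dyadic window, and not a set with $\log N\asymp(1-\beta)\log T$), and $\zeta(1+\rm{i}t)$ is approximated by its truncated Euler product $\zeta(1+{\rm i}t;Y)$ (Lemma~\ref{l61}, proved via Perron and the zero-free region, not via a truncated Dirichlet series and Vinogradov). The ratio $I_2/I_1$ then factorizes as the Euler product $\sum_{l\in\mathcal S(X)}a_l/l$, and Mertens gives ${\rm e}^\gamma(\log X - 1 + O(1/\log X))$. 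The $-1$ arises from the deficiency $a_p<1$; the $\log(1-\beta)-\log_24$ arises because controlling the contribution of $|t|\notin[T^\beta,T]$ forces $T^{2B+\beta}/I_1(R;T)=o(1)$, and since $\log\sum_na_n^2\sim(2-2\log2)X/\log X$ one gets $I_1\gg T^{1+(2-2\log2)B+o(1)}$, hence $B<(1-\beta)/\log4$ and $\log B<\log(1-\beta)-\log_24$. None of this balancing is visible in your proposal, and the choice of a narrow-window squarefree support together with a truncated Dirichlet series would change all of these exponents and error estimates; you would need to redo them from scratch rather than import the AMM template unchanged. To repair the proposal you would need to (i) drop the appeal to an improved G\'al bound, (ii) choose the resonator coefficients so that the $I_2/I_1$ ratio can be computed explicitly (the paper's $a_p=1-p/X$ does this), and (iii) actually carry out the error-term analysis that produces the constraint on the resonator's support size.
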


When $\beta=1/2$, we can choose the constant $c=-2.0197814$.
This gives a description of the error term $O(1)$ in the result {\eqref{result:AMM}} by Aistleitner, Mahatab and Munsch. When $\beta\to 0$, we can further choose $c=-1.32663426$.
Despite the enlarged range, this is comparable with the conjecture \eqref{conj} which predicts a larger constant
$C_0+1-\log2=-0.0885469$. Theorem \ref{th1.4} is also in accordance with the results on the Dirichlet $L$-functions, due to Aistleitner, Mahatab, Munsch and Peyrot \cite{Ai2019}.

Now we turn our attention to the values of the Riemann zeta function in the strip $1/2<\re s<1$. For any fixed $\sigma\in(1/2,1)$, Titchmarsh \cite{Ti28} in 1928 showed that for any $\varepsilon>0$ there exist arbitrarily large $t$ such that
$$\log |\zeta(\sigma+{\rm i}t)|\ge(\log t)^{1-\sigma-\varepsilon}.$$
In 1972, Levinson \cite{Le72} improved this result by showing that for large $T$ there exists a positive $c$ such that
$$\max_{t\in[0,T]}\log |\zeta(\sigma+{\rm i}t)|\ge c\frac{(\log T)^{1-\sigma}}{\log_2T}.$$
In 1977, Montgomery \cite{Mon77} showed that
\begin{align}
	\max_{t\in[0,T]}\log |\zeta(\sigma+{\rm i}t)|\ge c(\sigma)\frac{(\log T)^{1-\sigma}}{(\log_2T)^\sigma},\label{10}
\end{align}
where $c(\sigma)=\sqrt{\sigma-1/2}/20$ unconditionally, and $c(\sigma)=1/20$ under the Riemann hypothesis.
In 2016, using the resonance method, Aistleitner \cite{A16} improved Montgomery's unconditional result by showing that (\ref{10}) holds for $c(\sigma)=0.18(2\sigma-1)^{1-\sigma}$.

For the upper bound, Richert \cite{Ri1967} in 1967 proved that
$$
|\zeta(\sigma+\i t)|\le At^{B(1-\sigma)^{3/2}}(\log t)^{2/3}
$$
holds for some absolute $A$ and $B$.
Successive improvements of Richert's bound have reduced the admissible size of $A$ and $B$.
We refer to Cheng \cite{Cheng1999} and Ford \cite{Ford2002} for more details. Under the Riemann Hypothesis, we could have a much better upper bound (see \cite{La2011} and \cite{Ti86})
\begin{align}\label{upperbound}
\log |\zeta(\sigma+\i t)|\ll\frac{(\log t)^{2-2\sigma}}{\log_2t}.
\end{align}

It is conjectured that the true order of the magnitude of $\max_{t\in[0,T]}\log |\zeta(\sigma+{\rm i}t)|$ corresponds to the lower bound (\ref{10}) rather than the upper bound (\ref{upperbound}).
In 2011, based on the probabilistic model, Lamzouri \cite{La2011} gave an explicit conjectural value of $c(\sigma)$, claiming that
$$
\max_{t\in[0,T]}\log |\zeta(\sigma+{\rm i}t)|= c(\sigma)\frac{(\log T)^{1-\sigma}}{(\log_2T)^\sigma}
$$
holds with
\begin{align}
c(\sigma)
:= \frac{1}{\sigma^{2\sigma}(1-\sigma)^{1-\sigma}}\int_0^\infty{\frac{\log I_0(t)}{t^{{1}/{\sigma}+1}}} {\rm d}t,\label{LAM}
\end{align}
where $I_0(t)$ is the modified Bessel function of order $0$.

In 2018, Bondarenko and Seip \cite{BS18} made celebrated improvement on this topic.
They proved that there exists a function {$\nu(\sigma)$}
which is bounded below by $1/(2-2\sigma)$ and has the asymptotic behavior
\begin{align}
{\nu(\sigma)} = \left\{
\begin{array}{rcl}
\big\{1/\sqrt2+o(1)\big\} \sqrt{|\log(2\sigma-1)|} && {\sigma\searrow1/2},
\\\noalign{\vskip 2mm}
(1-\sigma)^{-1}+O(|\log(1-\sigma)|) && {\sigma\nearrow1}.
\end{array}\right.
\label{csigma}
\end{align}
Here $\sigma\searrow1/2$ means $\sigma$ tends to 1/2 from above {with $1/2+1/\log_2T\le\sigma\le 3/4$}
and $\sigma\nearrow1$ means $\sigma$ tends to 1 from {below} {with $3/4\le\sigma\le1-1/\log_2T$},
as $T\to+\infty$.
Then for $1/2+1/\log_2{T}\le\sigma\le3/4$,
\begin{align}\label{BK1}
\max_{t\in[\sqrt{T},T]}\log |\zeta(\sigma+{\rm i}t)|\ge {\nu(\sigma)}\frac{(\log T)^{1-\sigma}}{(\log_2T)^\sigma},
\end{align}
and for $3/4\le\sigma\le1-1/\log_2{T}$,
\begin{align}\label{BK2}
\max_{t\in[T/2,T]}\log |\zeta(\sigma+{\rm i}t)|\ge
{\log_3T + c} + {\nu(\sigma)}\frac{(\log T)^{1-\sigma}}{(\log_2T)^\sigma},
\end{align}
{where $c$ is an absolute constant.}

In this article, we aim to improve their first result (\ref{BK1}).
This is accomplished by deriving a lower bound for the maximum of G\'al-type sums, which is a kind of certain greatest common divisor (GCD) sums of the form
\begin{align}\label{GCDsumdef}
S_\sigma(\M)
:= \sum_{m,n\in \M} \frac{\gcd(m, n)^\sigma}{\lcm(m,n)^\sigma}
= \sum_{m,n\in \M} \frac{\gcd(m, n)^{2\sigma}}{(mn)^\sigma}.
\end{align}
For any positive integer $N$, we denote
\begin{align}
	\Gamma_\sigma(N):=\sup_{|\M|=N}\frac{S_\sigma(\M)}{N}.\label{211}
\end{align}
{A brief historic description on G\'al-type sums will be presented in the subsection 4.1.
By adapting the argument of de la Bret\`eche and Tenenbaum \cite{DT18} about $\Gamma_{1/2}(N)$,
we have the following theorem concerning the lower bound of $\Gamma_\sigma(N)$.}

\begin{theorem}\label{th241}
As $\sigma\searrow1/2$, we have
$$
\Gamma_\sigma(N)
\ge \exp\bigg(\big\{2\sqrt2+o(1)\big\}\sqrt{|\log(\sigma-1/2)|}\frac{(\log N)^{1-\sigma}}{(\log_2N)^\sigma}\bigg).
$$
\end{theorem}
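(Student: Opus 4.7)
My plan is to construct, for each $N$ and each $\sigma > 1/2$, an explicit extremal set $\M \subseteq \N$ with $|\M| = N$ realizing the lower bound, following the strategy used by de la Br\`eteche and Tenenbaum for $\Gamma_{1/2}(N)$. I choose parameters $y = y(N,\sigma)$ and $k = k(N,\sigma)$, let $P$ be a set of primes in a short interval near $y$ (so that $\log p = (1+o(1))\log y$ uniformly on $P$), and set
\[
\M := \left\{\prod_{p \in S}p : S \subseteq P,\ |S| = k\right\},
\]
arranging $|\M| = \binom{|P|}{k} \asymp N$.

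By the multiplicative identity $\gcd(m,n)^{2\sigma}/(mn)^\sigma = \prod_{p \in \mathrm{supp}(m) \triangle \mathrm{supp}(n)} p^{-\sigma}$ for squarefree $m,n$ supported on $P$, the GCD sum collapses to a symmetric-difference sum over $k$-subsets of $P$. Conditioning on $j := |S_1 \cap S_2|$ and using $p^\sigma \asymp y^\sigma$ on $P$,
\[
\frac{S_\sigma(\M)}{|\M|} \asymp \sum_{j=0}^k \frac{\binom{k}{j}\binom{|P|-k}{k-j}}{\binom{|P|}{k}}\, y^{-2\sigma(k-j)},
\]
which is a hypergeometric-type sum evaluated by a saddle-point analysis. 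The saddle sits at $k - j \asymp \sqrt{k|P|}\, y^{-\sigma}$ and delivers the single-scale estimate $\log(S_\sigma(\M)/|\M|) \gtrsim 2\sqrt{k|P|}\,y^{-\sigma}$.

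A single scale produces only a bounded constant in front of $(\log N)^{1-\sigma}/(\log_2 N)^\sigma$; the growing factor $\sqrt{|\log(\sigma-1/2)|}$ will emerge from a hierarchical upgrade, in the spirit of de la Br\`eteche--Tenenbaum. I split the prime range into $L = L(\sigma) \asymp |\log(\sigma-1/2)|$ geometric blocks $P_1, \ldots, P_L$, apply the single-scale recipe within each block with block-dependent parameters $(y_l, k_l)$, and take $\M$ to be the multiplicative product of the per-block sets $\M_l$. Since $|\M| = \prod_l |\M_l|$ and $S_\sigma(\M) = \prod_l S_\sigma(\M_l)$ by the disjointness of prime supports, the global bound reduces to optimizing $\sum_{l=1}^L \log(S_\sigma(\M_l)/|\M_l|)$ under the constraint $\sum_l \log |\M_l| = \log N$. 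Concavity of $x \mapsto x^{1-\sigma}$ (via Jensen / power mean) forces the optimum at comparable values of $\log |\M_l|$; assembling the contributions extracts the factor $L^\sigma \asymp \sqrt{|\log(\sigma-1/2)|}$ and gives the announced lower bound.

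The main obstacle is executing the saddle-point estimate uniformly in $\sigma$ with error $o(1)$ in the exponent, while simultaneously performing the joint optimization over the scale parameters to isolate precisely the sharp constant $2\sqrt{2}$. In particular, one has to control the Stirling errors as $L \to \infty$, justify the Laplace/saddle approximation uniformly as the saddle $\sqrt{k_l|P_l|}\,y_l^{-\sigma}$ varies across blocks, and invoke the prime number theorem in each short block with enough precision to secure the approximation $\log p = (1+o(1))\log y_l$ at the level of accuracy dictated by the target asymptotic.
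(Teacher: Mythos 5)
Your high-level architecture matches the paper's: split the primes into geometric blocks $I_j$ near $(\log N)\log_2N$, take $\M$ as a multiplicative product of per-block sets so that $S_\sigma$ and $|\cdot|$ both factor, estimate each block by a Laplace/saddle-point argument in the number of shared primes, and sum the per-block gains. But there is a decisive missing idea. Your per-block set $\M_l$ is the family of products of exactly-$k$ subsets of $P_l$, i.e.\ ordinary squarefree divisors. The paper instead follows de la Bret\`eche--Tenenbaum and defines $\M_j:=\{N_ja/b:\ ab\mid N_j,\ \omega(a),\omega(b)\le v_j\}$ with $N_j=\prod_{p\in I_j}p$, using Lemma~\ref{l241} to evaluate $\gcd$ and $\lcm$ for such ratios. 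This gives a two-sided construction where both the ``added'' part $a$ and the ``removed'' part $b$ contribute to the resonance, and the proof exploits it through the four inner sums over $A,A',B,B'$, producing the factor $4u_j$ in the exponent. Your one-sided divisor set produces only $2u_j$ while roughly halving the cardinality cost; tracing the effect through the optimization of $H(\alpha,\eta,f,\lambda)$ under $\alpha\log f\le 1$ versus $2\alpha\log f\le 1$, the best achievable constant with your construction is $2$, not $2\sqrt2$. So as written the argument proves a weaker inequality and does not establish the theorem.

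Two smaller issues. First, your displayed identity $S_\sigma(\M)/|\M|\asymp\sum_j\binom{k}{j}\binom{|P|-k}{k-j}\binom{|P|}{k}^{-1}y^{-2\sigma(k-j)}$ is off by a factor $|\M|=\binom{|P|}{k}$; the correct expression is $\sum_j\binom{k}{j}\binom{|P|-k}{k-j}y^{-2\sigma(k-j)}$, without the normalization by $\binom{|P|}{k}$. Your saddle conclusion $\log(S_\sigma/|\M|)\gtrsim 2\sqrt{k|P|}\,y^{-\sigma}$ is nonetheless correct, so this is a slip rather than a fatal error. Second, the number of blocks you propose, $L\asymp|\log(\sigma-1/2)|$, does not match the mechanism in the paper: there the number of blocks is $J=\lfloor(\sigma-1/2)^{-\lambda}\rfloor$ with $\lambda\to1^-$, and the factor $\sqrt{|\log(\sigma-1/2)|}$ arises as $\sqrt{\log J}$ from the sums $\sum_{j\le J}1/j\approx\log J$ appearing in both the cardinality budget and the gain. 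Your heuristic that ``Jensen forces comparable block sizes, producing $L^\sigma$'' is not what happens: the optimal $v_j$ decay like $1/j^2$ and the block sizes $\log|\M_j|$ decay like $1/j$, so they are not comparable across $j$, and the logarithmic sum $\sum 1/j$ is what generates the $\sqrt{|\log(\sigma-1/2)|}$. To fix your proposal you would need both to adopt the $N_ja/b$ construction with Lemma~\ref{l241} and to redo the joint optimization with a polynomially large $J$.
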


The following theorem sets up the relation between large values of the Riemann zeta function and the G\'al-type sums.
\begin{theorem}\label{th3}
For {$\beta\in(0,1)$ and} ${\sigma\in[1/2+1/\log_2T,1)}$, we have
$$
Z_\beta(\sigma,T)\gg\sqrt{\Gamma_\sigma(T^{1-\beta})},
$$
{where the implied constant is absolute.}
\end{theorem}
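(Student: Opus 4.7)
The plan is to apply the resonance method in its long-resonator form, following the framework of Aistleitner-Mahatab-Munsch \cite{Ai19} and Bondarenko-Seip \cite{BS18}. First, fix a set $\M$ of $N:=\lfloor T^{1-\beta}\rfloor$ positive integers nearly realizing the supremum in \eqref{211}, so that $S_\sigma(\M)\ge\{1-o(1)\}N\Gamma_\sigma(N)$. Attach to $\M$ the Dirichlet resonator $R(t):=\sum_{n\in\M}n^{-\i t}$, and pick a smooth non-negative weight $\Phi$, concentrated in $[T^\beta,T]$, whose Fourier transform $\widehat\Phi$ is itself non-negative and supported in a short frequency interval (for instance a Fej\'er-type kernel whose transform is $\max(0,1-|\xi|)$, rescaled to length $T$). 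The positivity of $\widehat\Phi$ will be essential: it allows one to keep only the diagonal in the eventual sums and to discard, rather than estimate, the off-diagonal contributions.

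The core of the argument rests on the elementary inequality
$$
Z_\beta(\sigma,T)^2 \ge \frac{\int|\zeta(\sigma+\i t)|^2|R(t)|^2\Phi(t)\d t}{\int|R(t)|^2\Phi(t)\d t},
$$
with boundary terms from outside $[T^\beta,T]$ estimated away using the classical bound $|\zeta(\sigma+\i t)|\ll(\log t)^{2/3}$. Expanding $|R|^2$, the denominator becomes $\sum_{m,n\in\M}\widehat\Phi(\log(m/n))$; since the elements of $\M$ are integers bounded by $N\le T^{1-\beta}$, distinct $m,n$ satisfy $|\log(m/n)|\gg 1/T^{1-\beta}$, which with the support condition on $\widehat\Phi$ leaves only the diagonal and gives $\{1+o(1)\}T\widehat\Phi(0)N$. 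For the numerator, I would insert the approximate functional equation to replace $|\zeta(\sigma+\i t)|^2$ by a smoothed Dirichlet polynomial $\sum_{h,k}(hk)^{-\sigma}(k/h)^{\i t}$ of appropriate length. The resulting fourfold sum contributes a main term from the diagonal $hm=kn$; parametrizing $m=gm'$, $n=gn'$ with $\gcd(m',n')=1$ and $g=\gcd(m,n)$ forces $(h,k)=(\ell n',\ell m')$, so the sum over $\ell$ yields a factor $\zeta(2\sigma)$, and one arrives at a main term of size $\{1+o(1)\}T\widehat\Phi(0)\zeta(2\sigma)S_\sigma(\M)$. Dividing yields $Z_\beta(\sigma,T)^2\gg S_\sigma(\M)/N\ge\{1-o(1)\}\Gamma_\sigma(N)$, whence the theorem after taking square roots.

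The main obstacle lies in the treatment of off-diagonal terms in the long-resonator regime $N=T^{1-\beta}$. For the short resonators of length $T^{\varepsilon}$ used in the classical Soundararajan setup, every required separation condition is automatic; here one must instead verify that the near-extremal sets underlying Theorem \ref{th241} can be taken to consist of integers bounded by $T^{1-\beta}$ without losing the essential size of $S_\sigma(\M)$, a compatibility which is the whole point of the construction of de la Bret\`eche-Tenenbaum adapted in Theorem \ref{th241}. A secondary technical issue is bounding the error term of the approximate functional equation after multiplication by $|R|^2\Phi$; this will be handled by Cauchy-Schwarz combined with Montgomery-Vaughan type mean-value estimates, using that the smoothing provided by $\Phi$ localizes the integrand so that the resonator of length $N\le T^{1-\beta}$ may safely be paired with a $\zeta$-polynomial of length $\asymp\sqrt T$.
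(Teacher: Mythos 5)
Your overall strategy — choose a near-extremal set $\M$ of size $T^{1-\beta}$, form the resonator $R(t)$, compare the weighted second moment of $|\zeta|^2|R|^2$ against that of $|R|^2$, and extract the GCD sum $S_\sigma(\M)$ from the diagonal via the parametrization $mk=nl$ with $m=gm'$, $n=gn'$, $\gcd(m',n')=1$ — is exactly the paper's framework, and the denominator estimate and the Rankin-type removal of the tail are also handled the same way. The gap is in how you propose to replace $|\zeta(\sigma+\mathrm{i}t)|^2$ by a finite Dirichlet sum. You suggest inserting the approximate functional equation, which produces a polynomial $\sum_{h,k}(hk)^{-\sigma}(k/h)^{\mathrm{i}t}$ of length $hk\asymp\sqrt T$. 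Once you expand the resonator as well, the oscillatory factors $(mk/nh)^{\mathrm{i}t}$ involve ratios of integers of size up to $T^{1-\beta}\cdot T^{1/2}=T^{3/2-\beta}$. The weight $\Phi$ is concentrated on $[T^\beta,T]$, so its Fourier transform cannot resolve frequency gaps finer than $\asymp 1/T$; but for $\beta<1/2$ there are many pairs with $mk\neq nh$ and $|\log(mk/nh)|\ll 1/T$, and these off-diagonal terms are neither killed by $\widehat\Phi$ nor absorbable by Cauchy--Schwarz plus mean-value estimates (which would require a quartic moment of $\zeta$ at a cost you cannot afford). So the plan as stated breaks down precisely in the range $\beta<1/2$ that matters for $Z_\beta$.

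The device the paper uses to circumvent this is the \emph{convolution method}: rather than applying the AFE, one convolves $\zeta(\sigma+\mathrm{i}t+\mathrm{i}u)\zeta(\sigma-\mathrm{i}t+\mathrm{i}u)$ with a Fej\'er kernel $K(u)$ whose Fourier transform $\widehat K$ is supported on $|\xi|\le 2\varepsilon\log T$. The exact identity of Lemma \ref{l522} (from de la Bret\`eche--Tenenbaum) rewrites $\int_{\R}\mathfrak Z_\sigma(t,u)\,\mathrm du$ as $\sum_{k,l}\widehat K(\log kl)\,k^{-\sigma-\mathrm{i}t}l^{-\sigma+\mathrm{i}t}$ plus two explicitly bounded pole terms; the cutoff $\widehat K(\log kl)$ kills everything with $kl>T^{2\varepsilon}$. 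Taking $\varepsilon$ tiny ($\varepsilon=1/2022$ in the text) makes the zeta-side polynomial so short that, combined with the resonator of length $T^{1-\beta}$, the total frequency bandwidth stays below $T$, and the positivity of $\widehat\phi$ lets you simply \emph{discard} all off-diagonal terms $mk\neq nl$ — no estimation is needed at all. Whatever GCD mass is lost by restricting $kl\le T^{2\varepsilon}$ (equivalently $\lcm(m,n)/\gcd(m,n)\le T^\varepsilon$) is recovered by the Rankin argument using \eqref{gamma12}. If you replace your step ``insert the approximate functional equation'' by ``convolve with the Fej\'er kernel and invoke Lemma \ref{l522}'', the rest of your outline (including using uniform resonator coefficients rather than the paper's $|\M_j|^{1/2}$-thinning, which in this setting is a cosmetic choice) goes through and reproduces the theorem.
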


As a direct deduction of Theorem \ref{th241}, we have the following corollary.

\begin{corollary}\label{}
Let $0<\beta<1$ be fixed and let $\frac{1}{2}<\sigma<1$. Then we have that
$$
\max_{t\in[T^\beta,T]}\log |\zeta(\sigma+{\rm i}t)|
\ge {c_{\beta}(\sigma)}\frac{(\log T)^{1-\sigma}}{(\log_2T)^\sigma}
$$
holds for a function {$c_{\beta}(\sigma)$} which has the asymptotic behavior
$$
{c_{\beta}(\sigma)} = \big\{\sqrt2+o(1)\big\}(1-\beta)^{1-\sigma}\sqrt{|\log(\sigma-1/2)|},$$
as $\sigma\searrow1/2$.
\end{corollary}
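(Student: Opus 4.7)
The plan is to chain together Theorem~\ref{th3} and Theorem~\ref{th241}, treating the corollary as essentially a bookkeeping exercise once the heavy lifting (the G\'al-type sum bound and the resonance-style reduction) is granted.

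First I would apply Theorem~\ref{th3} with $N = T^{1-\beta}$ to obtain
$$
\log Z_\beta(\sigma,T) \ge \tfrac12 \log \Gamma_\sigma(T^{1-\beta}) + O(1),
$$
so the problem is reduced to evaluating the right-hand side asymptotically as $\sigma \searrow 1/2$. Then I would substitute the bound from Theorem~\ref{th241}, which gives
$$
\tfrac12 \log \Gamma_\sigma(T^{1-\beta}) \ge \big\{\sqrt{2}+o(1)\big\}\sqrt{|\log(\sigma-1/2)|}\,\frac{(\log T^{1-\beta})^{1-\sigma}}{(\log_2 T^{1-\beta})^{\sigma}}.
$$

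Next I would unpack the two logarithmic factors. The numerator is exact: $(\log T^{1-\beta})^{1-\sigma} = (1-\beta)^{1-\sigma}(\log T)^{1-\sigma}$, and this is where the $(1-\beta)^{1-\sigma}$ factor in $c_\beta(\sigma)$ comes from. For the denominator, since $\beta$ is a fixed constant in $(0,1)$, we have
$$
\log_2 T^{1-\beta} = \log\!\big((1-\beta)\log T\big) = \log_2 T + \log(1-\beta),
$$
so $(\log_2 T^{1-\beta})^\sigma = (\log_2 T)^\sigma\big(1+O(1/\log_2 T)\big)^\sigma$, a ratio that tends to $1$ uniformly in $\sigma\in[1/2,1]$. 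Since $\sqrt{|\log(\sigma-1/2)|}\to\infty$ only logarithmically in $1/(\sigma-1/2)$, the multiplicative error coming from this factor can be absorbed into the $o(1)$ in front of $\sqrt{2}$. The additive $O(1)$ error picked up from Theorem~\ref{th3} is similarly negligible compared with the main term, which grows faster than any absolute constant as $\sigma\searrow 1/2$ (provided $T$ is large enough so that $\sigma\ge 1/2+1/\log_2 T$, which is assumed in Theorem~\ref{th3}).

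Combining these steps gives
$$
\max_{t\in[T^\beta,T]}\log|\zeta(\sigma+\i t)| \ge \big\{\sqrt{2}+o(1)\big\}(1-\beta)^{1-\sigma}\sqrt{|\log(\sigma-1/2)|}\,\frac{(\log T)^{1-\sigma}}{(\log_2 T)^\sigma},
$$
which is exactly the claim. There is no real obstacle here: the only point requiring care is verifying that the $\beta$-distortion of the iterated logarithm, namely the shift $\log_2 T \mapsto \log_2 T + \log(1-\beta)$, contributes only to the $o(1)$ and does not pollute the leading $\sqrt{2}(1-\beta)^{1-\sigma}$ constant. Everything else is a direct substitution from the two theorems that have already been set up.
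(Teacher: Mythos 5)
Your proposal is correct and follows essentially the same route the paper intends: the corollary is flagged as a direct deduction, and chaining Theorem~\ref{th3} (with $N=T^{1-\beta}$) and Theorem~\ref{th241}, then noting that $(\log T^{1-\beta})^{1-\sigma}=(1-\beta)^{1-\sigma}(\log T)^{1-\sigma}$ while the shift $\log_2 T^{1-\beta}=\log_2 T+\log(1-\beta)$ and the additive $O(1)$ from the $\gg$ in Theorem~\ref{th3} are both absorbed into the $o(1)$, is exactly the bookkeeping required. (The one slightly muddled sentence about $\sqrt{|\log(\sigma-1/2)|}$ growing only logarithmically is unnecessary for absorbing the multiplicative $(1+O(1/\log_2 T))^\sigma$ factor, but it does no harm.)
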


Therefore when $\beta=1/2$ and $\sigma\searrow1/2$, we improved the result of Bondarenko and Seip by a factor $2^\sigma$.
It is also worthy noting {that} Lamzouri's conjecture \eqref{LAM} predicts $c(\sigma)\sim (2\sigma-1)^{-1/2}$ as $\sigma\searrow1/2$.

For the sake of completeness, we also mention the large values of the Riemann zeta function on the critical line.
For the lower bound, de la Br\`eteche and Tenenbaum \cite{DT18} have shown that
$$
\max_{t\in[0,T]}|\zeta(1/2+\i t)|
\ge \exp\left(\big\{\sqrt2+o(1)\big\}\sqrt{\frac{\log T\log_3T}{\log_2T}}\right),
$$
improving earlier results made by Bondarenko and Seip \cite{BS17,BS19}.
For the upper bound, the {Lindel\"of} hypothesis states that for any $\varepsilon>0$
$$\zeta(1/2+\i t)\ll_{\varepsilon} t^\varepsilon,$$
while the best-known upper bound is due to Bourgain \cite{Bou17} who proved that
$$\zeta(1/2+\i t)\ll_{\varepsilon} t^{13/84+\varepsilon}.$$
However, we have the conjectural value due to Famer, Gonek and Hugh \cite{FGH07} which asserts that
$$
\max_{t\in[0,T]}|\zeta(1/2+\i t)|
= \exp\big(\big\{\sqrt2/2+o(1)\big\}\sqrt{\log T\log_2T}\big).
$$
For much earlier work, we refer to \cite{Ba86,Rama77,CS11,S08,Ti86}.

This article is organized as follows. In \S\ref{sec2}, we introduce some preliminary lemmas. We establish the approximation of the Riemann zeta function by its truncated Euler product. In \S\ref{sec3}, we discuss the large values of the Riemann zeta function $\zeta(s)$ on 1-line and establish Theorem \ref{th1.4}. In \S\ref{sec4}, we make a brief review on the G\'al-type sums and then prove Theorem \ref{th241}. Finally in \S\ref{sec5}, we connect G\'al-type sums to the values of the Riemann zeta function, and establish Theorem \ref{th3}.

\bigskip
\section{Preliminary lemmas}\label{sec2}

In this section, we introduce some preliminary lemmas.
We start with Mertens' formula with an explicit error term.
\begin{lemma}\label{l121} {\it
		Let $x>1000$, then we have
		$$\frac1{{\rm e}^\gamma\log x}\left(1-\frac{1}{2(\log x)^2}\right)\le\prod_{p\le x}\left(1-\frac{1}{p}\right) \le\frac1{{\rm e}^\gamma\log x}\left(1+\frac{1}{2(\log x)^2}\right).$$

	}
\end{lemma}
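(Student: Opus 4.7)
The plan is to recognise this as the classical explicit form of Mertens' third theorem, originally proved in essentially this shape by Rosser and Schoenfeld; their explicit bounds on prime sums yield inequalities of exactly the stated form, and for the threshold $x > 1000$ the constant $1/2$ in front of $(\log x)^{-2}$ comes out comfortably. I will outline the self-contained derivation rather than appeal as a black box.

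After taking logarithms and using $\log(1 \pm u) = \pm u + O(u^2)$ for small $u$, the claim would reduce to a two-sided estimate of the form
$$
\Bigl| \sum_{p \le x} \log(1 - 1/p) + \gamma + \log\log x \Bigr| \le \frac{1}{2(\log x)^2}\bigl(1 + o(1)\bigr).
$$
The first step would be to expand $-\log(1 - 1/p) = \sum_{k \ge 1} 1/(k p^k)$ and separate the $k = 1$ term from the tail; the linear part $\sum_{p \le x} 1/p$ would then be controlled via an explicit form of Mertens' second theorem, $\sum_{p \le x} 1/p = \log\log x + M + r(x)$, with $M$ the Meissel--Mertens constant and $|r(x)|$ of super-polynomial decay coming from Rosser--Schoenfeld's explicit bound on $\psi(x) - x$. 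The second step would combine the tail over $k \ge 2$ with $M$ using the classical identity $\gamma = M + \sum_{k \ge 2}\frac{1}{k}\sum_p p^{-k}$ to produce the Euler constant, with a discarded piece $\sum_{k \ge 2}\frac{1}{k}\sum_{p > x} p^{-k} \ll 1/x$ that is negligible at our scale. The final step would exponentiate, keeping the $O(u^2)$ correction from the Taylor expansion of $\exp$ under control against the numerical slack available at $x > 1000$, where $(\log x)^2 \ge (\log 1000)^2 \approx 47.7$.

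The main obstacle is not the asymptotic shape but the precise numerical constant $1/2$ at the stated threshold. This hinges on Rosser--Schoenfeld's explicit prime-counting bounds, whose derivation uses an explicit zero-free region for $\zeta$ together with direct numerical verification in a low range. For the applications in this paper the specific constant $1/2$ is convenient but not essential; only the qualitative shape $O((\log x)^{-2})$ of the error is what the subsequent arguments actually rely on, so any reasonable choice of constant would do.
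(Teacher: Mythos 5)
Your proposal correctly identifies the statement as the explicit form of Mertens' third theorem due to Rosser and Schoenfeld; the paper's proof is simply the citation ``See Theorem 7 of \cite{RS62},'' so you have matched the paper's approach, and your sketch of how Rosser--Schoenfeld obtain it (explicit Mertens' second theorem plus the identity $\gamma = M + \sum_p\sum_{k\ge 2}(kp^k)^{-1}$ and careful exponentiation) is a faithful outline of what lies behind that citation. Your closing observation is also apt: in the only place this lemma is invoked, the $1/(2(\log X)^2)$ term is absorbed into an $O(1/\log X)$ error, so the specific constant $1/2$ is indeed not load-bearing for the paper's final constants.
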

\begin{proof}
	See Theorem 7 of \cite{RS62}.
\end{proof}

The following lemma plays a key role in the proof of Theorem \ref{th241}.
\begin{lemma}\label{l122} {\it
		Let $x$ be large, then we have
		$$\sum_{p\le x}\frac{1}{p^\sigma}= \{1+o(1)\}\frac{x^{1-\sigma}}{(1-\sigma)\log x}.$$

	}
\end{lemma}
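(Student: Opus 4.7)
The plan is to derive this asymptotic from the Prime Number Theorem by partial summation. Writing $\pi(u)$ for the prime counting function, Abel summation applied to $\sum_{p\le x}p^{-\sigma}$ gives
$$
\sum_{p\le x}\frac{1}{p^\sigma}=\frac{\pi(x)}{x^\sigma}+\sigma\int_2^x\frac{\pi(u)}{u^{\sigma+1}}\,{\rm d}u.
$$
By PNT in the form $\pi(x)=\{1+o(1)\}x/\log x$, the boundary term contributes $\{1+o(1)\}x^{1-\sigma}/\log x$; this is of the same order of magnitude as, but strictly smaller than, the target $x^{1-\sigma}/((1-\sigma)\log x)$, so I keep track of it as a genuine contribution (not merely an error).

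For the integral, substituting $\pi(u)=\{1+o(1)\}u/\log u$ reduces it, up to a $(1+o(1))$ factor, to $\sigma\int_2^x u^{-\sigma}(\log u)^{-1}{\rm d}u$. I would then integrate by parts once, using the antiderivative $u^{1-\sigma}/(1-\sigma)$ of $u^{-\sigma}$, to obtain
$$
\int_2^x\frac{{\rm d}u}{u^\sigma\log u}=\frac{x^{1-\sigma}}{(1-\sigma)\log x}+\frac{1}{1-\sigma}\int_2^x\frac{{\rm d}u}{u^\sigma(\log u)^2}+O(1).
$$
A second application of the same device (or a crude splitting of the range at $\sqrt{x}$) shows that the remaining integral is smaller than the leading term by a factor of $\log x$, so
$$
\sigma\int_2^x\frac{\pi(u)}{u^{\sigma+1}}\,{\rm d}u=\{1+o(1)\}\frac{\sigma\, x^{1-\sigma}}{(1-\sigma)\log x}.
$$
Adding the boundary term yields the coefficient $1+\sigma/(1-\sigma)=1/(1-\sigma)$, which is exactly the constant claimed in the lemma.

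The main thing to watch for is uniformity in $\sigma$, since downstream (in the proof of Theorem \ref{th241}) the lemma will be applied in the regime $\sigma\searrow 1/2$ with $x$ depending on $\sigma$. Because $\sigma$ will always stay bounded away from $1$ in that application, the $(1+o(1))$ factor coming from PNT and the suppression of the secondary integral $\int u^{-\sigma}(\log u)^{-2}\,{\rm d}u$ both hold uniformly, and so this is not a genuine obstacle; a de la Vall\'ee Poussin-style error term in PNT would give room to spare. Hence the only real content of the argument is the partial summation and one clean integration by parts, and I expect no substantive difficulty.
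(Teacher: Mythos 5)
Your proof is correct and is essentially the standard argument: the paper does not give a self-contained proof but instead cites Lemma 6 of Bondarenko--Seip (and analogues in Norton, Lamzouri, Bober--Goldmakher), noting only that the lower-bound form of the prime number theorem used there should be replaced by the asymptotic $\pi(x)=\{1+o(1)\}x/\log x$. The Abel summation identity, the computation of the boundary term, the integration by parts producing $\sigma/(1-\sigma)$, and the recombination $1+\sigma/(1-\sigma)=1/(1-\sigma)$ are all correct, and your observation that uniformity is unproblematic because $\sigma$ stays bounded away from $1$ in the intended application is also right.

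One remark worth making, since you flagged uniformity: in the body of the paper the lemma is actually invoked for sums over the short intervals $I_j=(f^j(\log N)\log_2 N,\,f^{j+1}(\log N)\log_2 N]$ with $f$ possibly as close to $1$ as $1+((\log N)\log_2 N)^{-5/12+\varepsilon}$. Obtaining an asymptotic for $\sum_{p\in I_j}p^{-\sigma}$ by differencing the statement of Lemma \ref{l122} at the two endpoints is only legitimate if the $o(1)$'s are sharp enough not to swamp the difference; what is really being used there is the prime number theorem in short intervals (Huxley-type), which the authors acknowledge when they impose the lower bound on $f$. Your proof of the lemma as literally stated does not need this, but if you carry the lemma over to its use in \S 4 you should be aware that a stronger input (short-interval PNT fed into the same partial summation) is silently required.
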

\begin{proof}
	The is essentially Lemma 6 of \cite{BS18}. The only adjustment lies in the use of prime number theory, where we replace the lower bound by the asymptotic formula
$$\pi(x)=\{1+o(1)\}\frac{x}{\log x}.$$
For analogous statements, see also Lemma 3.1 of \cite{Nor92}, equation (2.1) of \cite{La2011}, and Lemma 3.3 of \cite{BG13}.
\end{proof}

The following lemma can be seen as a generalization of the greatest common divisors to rational numbers.

\begin{lemma}\label{l241}
Let $a,a', b,b'\in \N^*$ such that $\gcd(a,b)=1$ and $\gcd(a',b')=1$.
Then for any $N\in\N^*$ satisfying $b\mid N$ and $b'\mid N$, we have
$$
\frac{1}{N}\gcd\bigg(\frac{Na}{b},\frac{Na'}{b'}\bigg)=\frac{\gcd(a,a')}{\lcm(b, b')}
\qquad\text{and}\qquad
\frac{1}{N}\lcm\bigg(\frac{Na}{b},\frac{Na'}{b'}\bigg)=\frac{\lcm(a,a')}{\gcd(b,b')}.
$$
	As a direct inference, we have
	\begin{align*}
		\frac{\gcd(Na/b,Na'/b')}{\lcm(Na/b,Na'/b')}=\frac{\gcd(a,a')}{\lcm(a,a')}\frac{\gcd(b,b')}{\lcm(b, b')}.
	\end{align*}
\end{lemma}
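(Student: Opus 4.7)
My plan is to verify both identities prime-by-prime via $p$-adic valuations, then derive the quotient identity for free. For each prime $p$, set
\[
\alpha = v_p(a),\ \alpha' = v_p(a'),\ \beta = v_p(b),\ \beta' = v_p(b'),\ \nu = v_p(N).
\]
The coprimality hypotheses $\gcd(a,b)=\gcd(a',b')=1$ translate into $\min(\alpha,\beta)=0$ and $\min(\alpha',\beta')=0$, and the divisibility hypotheses $b\mid N$, $b'\mid N$ give $\nu\ge\max(\beta,\beta')$. In particular $Na/b$ and $Na'/b'$ are integers with $v_p(Na/b)=\nu+\alpha-\beta$ and $v_p(Na'/b')=\nu+\alpha'-\beta'$.

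For the first identity I need to establish
\[
\min(\alpha-\beta,\,\alpha'-\beta') \;=\; \min(\alpha,\alpha') - \max(\beta,\beta'),
\]
which would give $v_p\!\left(\tfrac{1}{N}\gcd(Na/b,Na'/b')\right)=v_p(\gcd(a,a')/\lcm(b,b'))$. This identity is false without coprimality, but under the standing hypotheses it reduces to a short case split on which of $\beta,\beta'$ is zero: if $\beta=\beta'=0$ it is trivial; if $\beta>0$ and $\beta'=0$ then $\alpha=0$ and both sides equal $\min(-\beta,\alpha')=-\beta$; the symmetric case is analogous; if both $\beta,\beta'>0$ then $\alpha=\alpha'=0$ and both sides equal $-\max(\beta,\beta')$. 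Summing over $p$ yields the first identity.

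For the second identity, rather than repeat the case analysis, I would use the universal relation $\gcd(x,y)\lcm(x,y)=xy$ with $x=Na/b$, $y=Na'/b'$:
\[
\lcm\!\left(\tfrac{Na}{b},\tfrac{Na'}{b'}\right)
= \frac{(Na/b)(Na'/b')}{\gcd(Na/b,Na'/b')}
= \frac{N^2 aa'/(bb')}{N\gcd(a,a')/\lcm(b,b')},
\]
and then simplify using $aa'=\gcd(a,a')\lcm(a,a')$ and $bb'=\gcd(b,b')\lcm(b,b')$. Finally, the quotient statement follows by dividing the two identities.

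I do not expect a real obstacle: the argument is elementary. The only point that requires attention is making sure the coprimality hypotheses are genuinely used in the key valuation identity; they are exactly what collapses the four-way case analysis to something symmetric in $(\beta,\beta')$, and it is easy to exhibit counterexamples if the hypotheses are dropped.
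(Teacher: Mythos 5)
Your proof is correct. For the first identity you take a different route from the paper: you work prime-by-prime with $p$-adic valuations, reducing the claim to the elementary fact that $\min(\alpha-\beta,\alpha'-\beta')=\min(\alpha,\alpha')-\max(\beta,\beta')$ whenever $\min(\alpha,\beta)=\min(\alpha',\beta')=0$ and handling the four cases directly. The paper instead argues globally: it factors $N\gcd(a,a')/\lcm(b,b')$ out of the $\gcd$, rewrites $\lcm(b,b')/b = b'/\gcd(b,b')$ and similarly for $b'$, and then observes that the residual $\gcd\big(\tfrac{b'}{\gcd(b,b')}\tfrac{a}{\gcd(a,a')},\ \tfrac{b}{\gcd(b,b')}\tfrac{a'}{\gcd(a,a')}\big)$ equals $1$ by the coprimality hypotheses. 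The two arguments are of comparable length; your local version has the pedagogical advantage of exhibiting exactly where coprimality collapses the case split, and of making it transparent that the identity fails without it, while the paper's global version avoids any case analysis at all. For the $\lcm$ identity and the final quotient, you and the paper do the same thing: invoke $\gcd(x,y)\lcm(x,y)=xy$ together with the first identity and $aa'=\gcd(a,a')\lcm(a,a')$, $bb'=\gcd(b,b')\lcm(b,b')$.
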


\begin{proof}
	Since $\lcm(b,b')\mid N$, we may write
\begin{align*}
\gcd\left(\frac{Na}{b},\frac{Na'}{b'}\right)
& = N\frac{\gcd(a,a')}{\lcm(b, b')}
\gcd\bigg(\frac{\lcm(b, b')}{b}\frac{a}{\gcd(a,a')},\frac{\lcm(b, b')}{b'}\frac{a'}{\gcd(a,a')}\bigg)
\\
& = N\frac{\gcd(a,a')}{\lcm(b, b')}
\gcd\bigg(\frac{b'}{\gcd(b,b')}\frac{a}{\gcd(a,a')},\frac{b}{\gcd(b,b')}\frac{a'}{\gcd(a,a')}\bigg).
\end{align*}
Then the first assertion follows by the assumptions of co-primeness and the second one follows by the simple fact that
$$
\gcd\bigg(\frac{Na}{b},\frac{Na'}{b'}\bigg)\cdot{\rm lcm}\bigg(\frac{Na}{b},\frac{Na'}{b'}\bigg)
= N^2 \frac{aa'}{bb'}
= N^2\frac{\gcd(a,a')\lcm(a,a')}{\gcd(b,b')\lcm(a,a')}.
$$
\end{proof}

For convenience, we denote the truncated Euler product of the Riemann zeta function by
	$$
	\zeta(s; y) := \prod_{p\le y} (1-p^{-s})^{-1}.$$
The following lemma approximates $\zeta(s)$ on the 1-line by its truncated Euler product.

\begin{lemma}\label{l61}
Let $0<\beta<1$ be fixed. Then we have
$$
\zeta(1+{\rm i}t) = \zeta(1+{\rm i}t;Y) \bigg\{1+O\bigg(\frac{1}{(\log T)^{1/\beta}}\bigg)\bigg\}
$$
uniformly for $T\ge 3$, $T^{\beta}\le |t|\le T$ and
$Y=\exp\{(\log T)^{1/\beta}\}$,
where the implied constant depends on $\beta$ only.
\end{lemma}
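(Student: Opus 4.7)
The plan is to compare the analytic continuations of $\log\zeta(s)$ and $\log\zeta(s;Y)$ at $s=1+\i t$ by a smoothed Mellin representation together with a contour shift into the classical zero-free region of $\zeta$. For $\mathrm{Re}(s)>1$ one has
\[
\log\frac{\zeta(s)}{\zeta(s;Y)}=\bigl(P(s)-P_Y(s)\bigr)+E(s),
\]
where $P(s):=\sum_p p^{-s}$, $P_Y(s):=\sum_{p\le Y}p^{-s}$ and $E(s):=\sum_{p>Y}\sum_{k\ge 2}(kp^{ks})^{-1}$. The double-power tail $E$ is analytic for $\mathrm{Re}(s)>1/2$ and satisfies $|E(1+\i t)|\ll 1/Y$, which is far below the target error. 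It therefore suffices to prove $|P(1+\i t)-P_Y(1+\i t)|\ll 1/\log Y=(\log T)^{-1/\beta}$, with $P(s)$ interpreted through its analytic continuation $\log\zeta(s)-\sum_{k\ge 2}P(ks)/k$.

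To this end I would insert a smooth exponential cutoff via the Mellin--Gamma identity $e^{-x}=(2\pi\i)^{-1}\int_{(c)}\Gamma(w)x^{-w}\,\mathrm{d}w$ for $c>0$, yielding
\[
\sum_p e^{-p/Y}\,p^{-1-\i t}=\frac{1}{2\pi\i}\int_{(c)}\Gamma(w)\,Y^w\,P(1+\i t+w)\,\mathrm{d}w,\qquad c>1.
\]
The cost of smoothing is small: by the prime number theorem $\sum_p|e^{-p/Y}-\mathbf{1}_{p\le Y}|/p\ll 1/\log Y$, so $\sum_p e^{-p/Y}p^{-1-\i t}=P_Y(1+\i t)+O(1/\log Y)$. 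Then I would shift the contour to $\mathrm{Re}(w)=-\alpha$ with $\alpha:=c'/\log T$, chosen small enough that the strip $\mathrm{Re}(s)\ge 1-\alpha$, $|\mathrm{Im}(s)|\le T+O(1)$ sits inside the classical zero-free region $\sigma>1-c/\log|\tau|$ of $\zeta$. Crossing the simple pole of $\Gamma$ at $w=0$ produces the residue $P(1+\i t)$.

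The shifted integral is then bounded by combining Stirling, $|\Gamma(-\alpha+\i v)|\ll(1+|v|)^{-\alpha-1/2}e^{-\pi|v|/2}$, the classical bound $|\log\zeta(s)|\ll\log T$ inside the zero-free region, and the decisive factor $|Y^w|=Y^{-\alpha}=\exp\bigl(-c'(\log T)^{1/\beta-1}\bigr)$. Since $\beta<1$ forces $1/\beta-1>0$, this last factor decays faster than any power of $\log T$, so the shifted integral is $o((\log T)^{-A})$ for every $A$. Collecting terms yields $P(1+\i t)-P_Y(1+\i t)=O((\log T)^{-1/\beta})$, and exponentiating produces the stated approximation.

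The chief obstacle is the compatibility between the width $\alpha\asymp 1/\log T$ of the classical zero-free region and the size $\log Y=(\log T)^{1/\beta}$ of the truncation: the gain $Y^{-\alpha}$ from the shift must dominate the polynomial growth of $\log\zeta$ on the shifted contour. The hypothesis $\beta<1$ is precisely what makes $\alpha\log Y=c'(\log T)^{1/\beta-1}\to\infty$, providing the required decay and explaining why the implied constant is allowed to depend on $\beta$.
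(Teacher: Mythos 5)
Your proposal follows essentially the same strategy as the paper: take an integral representation of the truncated Dirichlet series (the paper uses Perron's formula with a sharp cutoff; you use the Mellin--Gamma kernel $\Gamma(w)Y^w$ with a smooth cutoff $e^{-p/Y}$), shift the contour a distance $\alpha\asymp 1/\log T$ to the left of the $1$-line into the classical zero-free region, pick up the residue at $w=0$, and then exploit the decisive factor $Y^{-\alpha}=\exp\bigl(-c'(\log T)^{1/\beta-1}\bigr)$, which tends to infinity precisely because $\beta<1$. The decomposition of $\log(\zeta/\zeta(\cdot;Y))$ into the prime sum $P-P_Y$ plus the higher-prime-power tail $E$, and the estimate $\sum_p|e^{-p/Y}-\mathbf 1_{p\le Y}|/p\ll 1/\log Y$, are both fine and deliver an error of the right size $O(1/\log Y)$.

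The gap is in the contour shift itself. You shift over the \emph{infinite} vertical line $\Re w=-\alpha$. This creates two problems. First, the corresponding line $\Re s=1-\alpha$ leaves the classical zero-free region $\sigma>1-c/\log(|\tau|+3)$ once $|\Im s|\gg\exp(c/\alpha)$; beyond that height $\zeta$ has zeros and $\log\zeta$ (hence your analytically continued $P$) fails to be analytic, so the integral over the full shifted line is not defined, irrespective of how small $\Gamma(w)$ becomes. Second, the strip $-\alpha\le\Re w\le c$ through which you shift contains the logarithmic branch point of $\log\zeta(1+\i t+w)$ at $w=-\i t$ (i.e.\ $s=1$), so a straight contour shift passes through a branch cut. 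Both difficulties are exactly what the paper's choice $T_0=T^\beta/2$ addresses: the rectangular contour has half-height $T_0<|t|$, so the branch point lies outside the rectangle, and the whole rectangle sits inside the zero-free region. Your argument becomes correct if you first truncate the Mellin integral at height $T_0=T^\beta/2$ (cheap, by the rapid decay of $\Gamma$ on $\Re w=c$), and only then shift the resulting finite segment; the $\Gamma$ decay then plays the role the paper's bound $\ll\log T/T_0$ plays for the horizontal edges. With that fix, the two proofs are essentially the same argument in different packaging.
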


\begin{proof}
{
Let $s=1+{\rm i}t$ with $T^{\beta}\le |t|\le T$. Firstly, when $\re w>1$, we have
$$
\log\zeta(w)
= \log\prod_p\bigg(1-\frac{1}{p^w}\bigg)^{-1}
= \sum_p\sum_{\nu\ge1}\frac{1}{\nu p^{\nu w}}
= \sum_{n\ge 2} \frac{\Lambda(n)}{n^w\log n},
$$
where $\Lambda(n)$ is the von Mangoldt function.
Set $\sigma_1=1/\log Y$, $\sigma_0=-c/\log T$ for suitably positive constant $c=c(\beta)>0$ and
$T_0=T^{\beta}/2$.
Denote the contour joining
$\sigma_1-{\rm i}T_0$,
$\sigma_1+{\rm i}T_0$,
$\sigma_0+{\rm i}T_0$,
 $\sigma_0-{\rm i}T_0$ and
 $\sigma_0+{\rm i}T_0$ by $\Gamma$, i.e.,
\begin{align*}
\oint_\Gamma
= \int_{\sigma_1-{\rm i}T_0}^{\sigma_1+{\rm i}T_0}
		+\int_{\sigma_1+{\rm i}T_0}^{\sigma_0+{\rm i}T_0}
		+\int_{\sigma_0+{\rm i}T_0}^{\sigma_0-{\rm i}T_0}
		+\int_{\sigma_0-{\rm i}T_0}^{\sigma_1-{\rm i}T_0}.
	\end{align*}
	Since $\log\zeta(1+{\rm i}t+w)Y^w$ is analytic inside $\Gamma$, by Cauchy's integral formula we have
\begin{equation}\label{int/Gamma}
\frac{1}{2\pi {\rm i}}\oint_\Gamma\log\zeta(1+{\rm i}t+w)\frac{Y^w}{w}\d w=\log\zeta(1+{\rm i}t).
\end{equation}
	Then by Perron's formula (\cite[Corollary II.2.1]{Te95} with $s=1+{\rm i}t$, $\sigma_a=\alpha=1$,
$\kappa=1/\log Y$ and $B(x)=1$), we have
$$
\frac{1}{2\pi {\rm i}}\int_{\sigma_1-{\rm i}T_0}^{\sigma_1+{\rm i}T_0} \log\zeta(1+{\rm i}t+w)\frac{Y^w}{w}\d w
= \sum_{2\le n\le Y} \frac{\Lambda(n)}{n^{1+{\rm i}t}\log n}+O\bigg(\frac{\log T}{T_0}\bigg).
$$
On the other hand, noticing that
\begin{align*}
\sum_{p\le Y, \, p^{\nu}>Y} \frac{1}{\nu p^{\nu}}
\le \frac{1}{\log Y} \sum_{p\le Y, \, \nu\ge 2} \frac{\log p}{p^{\nu}}
\ll \frac{1}{\log Y},
\end{align*}
we can write
\begin{align*}
\sum_{2\le n\le Y} \frac{\Lambda(n)}{n^{1+{\rm i}t}\log n}
& = \sum_{p^{\nu}\le Y} \frac{1}{\nu p^{\nu(1+{\rm i}t)}}
= \sum_{p\le Y} \sum_{\nu\ge 1} \frac{1}{\nu p^{\nu(1+{\rm i}t)}}
+ O\bigg(\frac{1}{\log Y}\bigg)
\\
& = \log \zeta(1+{\rm i}t; Y)
+ O\bigg(\frac{1}{\log Y}\bigg).
\end{align*}
Inserting this into the preceding formula, we get
\begin{equation}\label{61}
\frac{1}{2\pi {\rm i}}\int_{\sigma_1-{\rm i}T_0}^{\sigma_1+{\rm i}T_0} \log\zeta(1+{\rm i}t+w)\frac{Y^w}{w}\d w
= \log \zeta(1+{\rm i}t; Y)+O\bigg(\frac{1}{(\log T)^{1/\beta}}\bigg).
\end{equation}

For the other three integrals,
in view of bounds of $\log \zeta(w)$ in the zero-free region (see \cite[Theorem II.3.16]{Te95}), typically we have
$$
\bigg(\int_{\sigma_1+{\rm i}T_0}^{\sigma_0+{\rm i}T_0}
		+\int_{\sigma_0+{\rm i}T_0}^{\sigma_0-{\rm i}T_0}\bigg)\log\zeta(1+{\rm i}t+w)\frac{Y^w}{w}\d w
\ll \frac{\log T}{T_0},
$$
	and
$$
\int_{\sigma_0+{\rm i}T_0}^{\sigma_0-{\rm i}T_0} \log\zeta(1+{\rm i}t+w)\frac{Y^w}{w}\d w
\ll\frac{(\log T)^2}{\exp(c(\log T)^{(1/\beta-1)})}.
$$
	Thus the lemma follows from \eqref{int/Gamma}, \eqref{61} and these two bounds.}
\end{proof}

\bigskip

\section{Extreme values of $|\zeta(1+{\rm i}t)|$: Proof of Theorem 1.1}\label{sec3}

Choose $B$ such that ${\rm e}^{c+1}<B<(1-\beta)/\log4$ and set $X:=B\log T\log_2T$.
{Denote by $\mathcal{S}(X)$ the set of all $X$-friable numbers.}
Let $a_n=a(n)$ be the completely multiplicative function supported on $\mathcal{S}(X)$ with $a_p=1-p/X$ for $p\le X$
and $a_p=0$ otherwise.
Define the resonator
$$
R(t) = \sum_{n\in \mathcal{S}( X)} a_nn^{{\rm i}t}
= \prod_{p\le X} (1-a_p{p^{{\rm i}t}})^{-1}.
$$
	Then by the prime number theorem, {for $t\in \R$} we have
$$
\log|R({t})|
\le \sum_{p\le X}\log(1-a_p)^{-1}
= \pi(X)\log X-\theta(X),
$$
where $\theta(x)$ is the Chebyshev function. It is well known that
$$
\pi(x)\log x-\theta(x)=\{1+o(1)\}\frac{x}{\log x}
$$
and thus we have
\begin{equation}\label{UB:R(t)}
|R({t})|\le T^{B+o(1)}
\qquad
{(t\in \R)}.
\end{equation}

    Set the weight function to be the Gaussian function $\phi(t)={\rm e}^{-t^2}$ which satisfies
	$$\widehat\phi(x)=\int_{\mathbb R}\phi(t){\rm e}^{-{\rm i}tx}\d t=\sqrt{2\pi}\phi(x).$$
	Denote
	$$M_1(R; T) := \int_{T^\beta\le |t|\le T}|R(t)|^2\phi\bigg(\frac{t\log T}{T}\bigg)\d t,$$
	and
	$$M_2(R; T) := \int_{T^\beta\le |t|\le T}\zeta(1+{\rm i}t)|R(t)|^2\phi\bigg(\frac{t\log T}{T}\bigg)\d t.$$
	Then clearly we have
	\begin{align}
		Z_\beta(T)\ge\frac{|M_2(R; T)|}
		{M_1(R; T)}.\label{64}		
	\end{align}
Put $Y:=\exp((\log T)^{1/\beta})$ as in Lemma \ref{l61}. It follows that
\begin{align*}
M_2(R; T)
= \bigg\{1+O\bigg(\frac{1}{(\log T)^{1/\beta}}\bigg)\bigg\}
\int_{T^\beta\le |t|\le T}\zeta(1+{\rm i}t;Y)|R(t)|^2\phi\bigg(\frac{t\log T}{T}\bigg)\d t.
\end{align*}
Using \eqref{UB:R(t)} and  the trivial bound
$$
|\zeta(1+{\rm i}t;Y)|\ll\log Y=(\log T)^{1/\beta}
\qquad
{(t\in \R)},
$$
we can deduce that
\begin{align*}
	\left|\int_{|t|\le T^\beta}\zeta(1+{\rm i}t;Y)|R(t)|^2\phi\bigg(\frac{t\log T}{T}\bigg)\d t\right|
	\le T^{2B+\beta+o(1)},
\end{align*}
and
\begin{align*}
\bigg|\int_{|t|\ge  T}\zeta(1+{\rm i}t;Y)|R(t)|^2\phi\bigg(\frac{t\log T}{T}\bigg)\d t\bigg|
\ll T^{2B+o(1)} \int_T^{\infty} {\rm e}^{-(t\log T/T)^2} \d t
\ll1.
\end{align*}
Thus by denoting
$$
I_2(R; T) = \int_{\mathbb R}\zeta(1+{\rm i}t;Y)|R(t)|^2\phi\bigg(\frac{t\log T}{T}\bigg)\d t,$$
we have
$$
M_2(R; T)
= \bigg\{1+O\bigg(\frac{1}{(\log T)^{1/\beta}}\bigg)\bigg\}
\big\{I_2(R; T)+O\big(T^{2B+\beta-1+o(1)}\big)\big\}.
$$
On the other hand, we have
$$
M_1(R; T)
\le \int_{\mathbb R}|R(t)|^2\phi\bigg(\frac{t\log T}{T}\bigg)\d t
=: I_1(R; T),
$$
Therefore by (\ref{64}), we derive that
\begin{equation}\label{66}
Z_\beta(T)
\ge \bigg\{1+O\bigg(\frac{1}{(\log T)^{1/\beta}}\bigg)\bigg\}
\frac{I_2(R; T)+O(T^{2B+\beta-1+o(1)})}{I_1(R; T)}.		
\end{equation}

We first estimate the ratio $I_2(R; T)/I_1(R; T)$.
For $I_1(R; T)$, we have
\begin{align}
I_1(R; T)
& = \int_{\mathbb R} \sum_{m,n\in \mathcal{S}(X)}
a_ma_n\bigg(\frac m n\bigg)^{{\rm i}t} \phi\bigg(\frac {t\log T}{T}\bigg)\d t
\nonumber
\\
& = \frac{T}{\log T} \sum_{m,n\in \mathcal{S}(X)}a_ma_n {\widehat\phi}\bigg(\frac{T}{\log T}\log\Big(\frac{n}{m}\Big)\bigg)
\ge\widehat\phi(0)\frac{T}{\log T}\sum_{n\in \mathcal{S}(X)} a_n^2,
\label{67}
\end{align}
{thanks to the positivity of $a_n$ and $\widehat\phi(t)$.}
Similarly, for $I_2(R; T)$ we have
$$
I_2(R; T)
= \frac{T}{\log T}
\sum_{l\in \mathcal{S}(Y)} \sum_{m,n\in \mathcal{S}(X)} \frac{a_ma_n}{l} \widehat\phi\bigg(\frac{T}{\log T}\log\frac{nl}{m}\bigg).
$$
By sifting the terms with $l|m$ {and noticing $Y>X$}, we have that
\begin{align*}
I_2(R; T)
& \ge \frac{T}{\log T} \sum_{l,n,k\in \mathcal{S}(X)} \frac{a_{kl}a_n}{l} \widehat\phi\bigg(\frac{T}{\log T}\log\frac{n}{k}\bigg)
\\
& = \frac{T}{\log T} \sum_{l\in \mathcal{S}(X)} \frac{a_{l}}{l} \sum_{k,n\in \mathcal{S}(X)} a_ka_n
\widehat\phi\bigg(\frac{T}{\log T}\log\frac{n}{k}\bigg).
\end{align*}
Therefore, we can deduce that
$$
\frac{I_2(R; T)}{I_1(R; T)}
\ge \sum_{l\in \mathcal{S}(X)}\frac{a_{l}}{l}
= \prod_{p\le X}\bigg(1-\frac{a_p}{p}\bigg)^{-1}
= \prod_{p\le X} \bigg(1-\frac{1}{p}\bigg)^{-1} \prod_{p\le X}\bigg(\frac{p-1}{p-a_p}\bigg).
$$
For the first product, we use Lemma \ref{l121} to derive that
$$
\prod_{p\le X} \bigg(1-\frac{1}{p}\bigg)^{-1}
\ge {\rm e}^\gamma\log X \bigg(1-\frac{1}{2(\log X)^2}\bigg).
$$
For the second one, since
\begin{align*}
-\log\prod_{p\le X}\left(\frac{p-1}{p-a_p}\right)
\le\sum _{p\le X}\frac{p}{(p-1)X}
\le \bigg\{1+{O\bigg(\frac{1}{\log X}\bigg)}\bigg\}\frac{1}{\log X},	
\end{align*}
we have
$$
\prod_{p\le X} \bigg(\frac{p-1}{p-a_p}\bigg)
\ge 1-\frac{1}{\log X}+O\bigg(\frac{1}{(\log X)^2}\bigg).
$$
It follows that
\begin{equation}\label{61}
\frac{I_2(R; T)}{I_1(R; T)}\ge{\rm e}^\gamma \bigg\{\log X-1+{O\bigg(\frac{1}{\log X}\bigg)}\bigg\}.
\end{equation}

It remains to exclude the influence of the error term in \eqref{66}.
In view of \eqref{67}, we deduce that
\begin{align}
\log\sum_{n\in \mathcal{S}(X)}a_n^2
& = \log\prod_{p\le X} \big(1-a_p^2\big)^{-1}
= \log\prod_{p\le X} \big(1-(1-p/X)^2\big)^{-1}
\nonumber\\
& = 2\pi(X)\log X-\theta(X)-\sum_{p\le X}\log(2X-p).\label{62}
\end{align}
By partial summation formula, we can derive that
\begin{align}
	\sum_{p\le X}\log(2X-p)=\pi(X)\log X+\int_1^{X/(\log X)^2}\frac{\pi(t)}{2X-t}\d t+\int_{X/(\log X)^2}^X\frac{\pi(t)}{2X-t}\d t.\label{63}
\end{align}
For the first integral, we have
$$
\int_1^{X/(\log X)^2}\frac{\pi(t)}{2X-t}\d t
\ll \frac{X}{(\log X)^3}\int_1^{X/(\log X)^2}\frac{\d t}{2X-t}
\ll \frac{X}{(\log X)^{{5}}}.
$$
For the second one, the interval guarantees that
$$
\pi(t) = \frac{t}{\log X} \bigg\{1+O\bigg(\frac{\log_2X}{\log X}\bigg)\bigg\}.
$$
Thus we have
\begin{align*}
\int_{X/(\log X)^2}^X\frac{\pi(t)}{2X-t}\d t
= \{2\log2-1+o(1)\} \frac{X}{\log X}.
\end{align*}
Combining with (\ref{62}) and (\ref{63}), we obtain that
$$
\log\sum_{n\in \mathcal{S}(X)} a_n^2
= \{2-2\log2+o(1)\} \frac{X}{\log X},
$$
which implies by (\ref{67}) that
$$	I_1(R; T)\gg T^{(2-2\log2)B+1+o(1)}.$$
In view of (\ref{66}), for the error term we have
$$
\frac{T^{2B+\beta-1+o(1)}}{I_1(R; T)}
\le \frac{T^{2B+\beta+o(1)}}{T^{(2-2\log2)B+1+o(1)}}
= T^{B\log 4-(1-\beta)+o(1)},
$$
where the last exponent is negative and thus admissible.
Substituting this and \eqref{61} into (\ref{66}), we deduce that
\begin{align*}
	Z_\beta(T)
	\ge{\rm e}^\gamma(\log_2T+\log_3T+c),
\end{align*}
where $c{<}\log B-1$.
This completes the proof.
\hfill
$\square$

\bigskip

\section{Lower bound for $\Gamma_\sigma(N)$ as $\sigma\searrow1/2$: Proof of Theorem \ref{th241}}\label{sec4}

\subsection{A brief review on G\'al-type sums}\

\vskip 1mm
{In this subsection, we make a brief review on the G\'al-type sums.}
The study of $\Gamma_\sigma(N)$ arises naturally in metric Diophantine approximation.
When $\sigma=1$, this was a prize problem posed by the Dutch Mathematical Society in 1947 on Erd\"{o}s's suggestion.
G\'al \cite{Gal} investigated the problem in 1949 and proved that
$$\Gamma_1(N)\ll (\log_2N)^2.$$
Thereafter, the GCD sums (\ref{GCDsumdef}) are also known as ``G\'al-type sums''.
In 2017, Lewko and Radziw\l\l  { }\cite{Lew17} used the method of probabilitic models to give a much easier proof of G\'al's theorem.
They further determined the implied constant and proved that as $N\to\infty$, one has
$$
\Gamma_1(N)=\bigg\{\frac{{\rm e}^{2\gamma}}{\zeta(2)}+o(1)\bigg\}(\log_2N)^2.
$$

{Let $\M$ be a finite set of integers.}
For general $\sigma$, define the spectral norm of the GCD matrix
$({\gcd(m, n)^\sigma}/{\lcm(m,n)^\sigma})_{(m, n)\in \mathcal{M}^2}$ as
$$
Q_\sigma(\M)
:= \sup_{\substack{\boldsymbol{c}\in\C^{|\M|}\\ \|\boldsymbol{c}\|_2=1}}
\bigg|\sum_{m,n\in \M}c_m\overline{c_n}\frac{\gcd(m, n)^\sigma}{\lcm(m,n)^\sigma}\bigg|,
$$
where ${\boldsymbol{c}} := (c_1,\dots,c_N)\in \C^N$ and its norm ${\|}\boldsymbol{c}\|_2 := \sum_{j=1}^N|c_j|^2$.
Then {in} 2015, Aistleitner, Bondarenko and Seip \cite{ABS15} showed that
$$\Gamma_{1/2}(N)\le\sup_{|\M|=N}Q_{1/2}(\M)\le({\rm e}^2+1)(\log N+2)\max_{n\le N}\Gamma_{1/2}(n).$$
Recently, de la Bret\`eche and Tenenbaum \cite{DT18} gave asymptotic formulas for $\Gamma_{1/2}(N)$. Namely, they proved that
\begin{align}\label{gamma12}
\Gamma_{1/2}(N)
= \exp\left(\big\{2\sqrt2+o(1)\big\}\sqrt\frac{\log N\log_3N}{\log N}\right),
\end{align}
as $N\to\infty$.
For further details about GCD sums, we refer to \cite{ABS15,BHS2016,BS15,Lew17}.

{
Since Theorem \ref{th241} is a lower bound of $\Gamma_{\sigma}(N)$,
we only need to construct a set of integers $\M$ with $|\M|\le N$ such that
$$
\frac{S_{\sigma}(\M)}{|\M|}
\ge \exp\bigg(\big\{2\sqrt2+o(1)\big\}\sqrt{|\log(\sigma-1/2)|}\frac{(\log N)^{1-\sigma}}{(\log_2N)^\sigma}\bigg)
$$
for $1/2+1/\log_2N\le \sigma<1$ and $N\to\infty$.
Next we prove this by adapting the method of de la Bret\`eche and Tenenbaum \cite{DT18}.}

\subsection{Construction of the set $\M$}\

\vskip 1mm
Let $\alpha\in(1,+\infty)$, ${\eta}\in (0,+\infty)$, $f\in(1,{\rm e}]$ and $\lambda\in(0,1)$ {be some parameters}.
For $1\le j\le J:=\lfloor(\sigma-1/2)^{-\lambda}\rfloor$, define
$$
I_j := (f^j(\log N)\log_2N, f^{j+1}(\log N)\log_2N].
$$
Then for each interval $I_j$, we have that
\begin{equation}\label{size:Pj}
P_j := \sum_{p\in I_j}1
= (f-1)f^j(\log N)\bigg\{1+O_{\varepsilon}\bigg(\frac{j+\log_3N}{\log_2N}\bigg)\bigg\},
\end{equation}
{where we have supposed that $f\ge 1+((\log N)\log_2N)^{-5/12+\varepsilon}$
such that the prime number theorem in short intervals holds and the implied constant depends on $\varepsilon$ only.
On the other hand we note that the hypothesis $\sigma\ge 1/2+1/\log_2N$ {guaranties} $J\le (\log_2N)^{\lambda}=o(\log_2N)$.}

Let $N_j=\prod_{p\in I_j}p$ and $\omega(\cdot)$ {counts} the number of different prime factors. Set
$$
u_j := \bigg\lfloor\frac{{\eta}(\log N)^{1-\sigma}}{jf^{j(\sigma-1/2)}\sqrt {\log J}(\log_2N)^{\sigma}}\bigg\rfloor,
\qquad
v_j := \bigg\lfloor\frac{\alpha\log N}{j^2\log J}\bigg\rfloor.
$$
Then we define
$$
\M_j:=\bigg\{m=\frac{N_ja}{b} \,:\, ab\mid N_j \;\, \text{and} \;\, \omega(a),\,\omega(b)\le v_j\bigg\}
$$
and
\begin{align*}
	\M:=\prod_{1\le j\le J}\M_j=\bigg\{m=\prod_{1\le j\le J}m_j:\,m_j\in\M_j\,\,(1\le j\le J)\bigg\}.
\end{align*}

Now we evaluate the cardinal of $\M$. For $1\le j\le J$, we have
\begin{equation}\label{M_j}
|\M_j|=\sum_{\substack{0\le k\le v_j\\ 0\le \ell\le v_j}} \binom{P_j}{k}\binom{P_j-k}{\ell}.
\end{equation}
For fixed $k$, since $P_j$ is much larger than $v_j$, for $0\le \ell\le v_j$ we can deduce that
$$
{\binom{P_j-k}{v_j}
= \frac{(P_j-k-v_j)\cdots (P_j-k-(\ell+1))}{v_j\cdots (\ell+1)} \binom{P_j-k}{\ell}
\ge 2^{v_j-\ell} \binom{P_j-k}{\ell}.}
$$
Therefore, we have
\begin{equation}\label{inequality}
\sum_{0\le \ell\le v_j} \binom{P_j-k}{\ell}
\le {\sum_{0\le \ell\le v_j} 2^{-(v_j-\ell)} \binom{P_j-k}{v_j}}
\le 2 \binom{P_j-k}{v_j}.
\end{equation}
On the other hand, we have
$$
{\binom{P_j}{k}\binom{P_j-k}{v_j} = \binom{P_j}{v_j} \binom{P_j-v_j}{k}}.
$$
Thus, by \eqref{M_j} and \eqref{inequality}, we deduce that
\begin{align}\label{cardinalMj}
|\M_j|
\le 2\sum_{0\le k\le v_j} \binom{P_j}{k}\binom{P_j-k}{v_j}
{= 2 \binom{P_j}{v_j} \sum_{0\le k\le v_j} \binom{P_j-v_j}{k}}
\le 4 \binom{P_j}{v_j} \binom{P_j-v_j}{v_j}.
\end{align}
Using the Euler-Maclaurin formula, we have
\begin{align*}
	&|\M|\le\prod_{1\le j\le J}4\binom{P_j-v_j}{v_j}\binom{P_j}{v_j}\le\prod_{1\le j\le J}4\left(\frac{{\rm e}(P_j-v_j)}{v_j}\right)^{v_j}\left(\frac{{\rm e}P_j}{v_j}\right)^{v_j}\le&\prod_{1\le j\le J}4\left(\frac{{\rm e}P_j}{v_j}\right)^{2v_j}.
\end{align*}
Therefore, by \eqref{size:Pj} and the definitions of $v_j$ and $J$, we have
\begin{align*}
\log|\M|\le \sum_{1\le j\le J}2v_j \{j\log f+O({\log j+\log_4N})\}
\le 2\alpha\log f\log N+o(\log N),
\end{align*}
and consequently we obtain that
\begin{align}
	|\M|\le N^{2\alpha\log f+o(1)}.\label{cardinalM}
\end{align}

\subsection{Proof of Theorem \ref{th241}}\

\vskip 1mm
Note that $ab\mid N_j$ implies $\gcd(a,b)=1$ since $N_j$ is square-free. By Lemma \ref{l241}, we have
\begin{align}\label{245}
S_\sigma(\M_j)
= \sum_{\substack{a,a'\mid N_j\\\omega(a), \, \omega(a')\le v_j}} \frac{\gcd(a,a')^\sigma}{\lcm(a,a')^\sigma}\sum_{\substack{b,b'\mid N_j\\ \gcd(a, b)=\gcd(a', b',)=1\\  \omega(b), \, \omega(b')\le v_j}}
\frac{\gcd(b,b')^\sigma}{\lcm(b,b')^\sigma}.
\end{align}
Denote the inner sum by $\widetilde{S}_\sigma=\widetilde{S}_\sigma(a, a')$,
and let $\varphi_{2\sigma}(d)$ be the Euler's totient function of order $2\sigma$,
satisfying $\sum_{d\mid D}\varphi_{2\sigma}(d)=D^{2\sigma}$.
Then we have
\begin{align*}
\widetilde{S}_\sigma
& =\sum_{\substack{D,b,b'|N_j\\\gcd(b,a)=\gcd(b',a')=1\\\gcd(b,b')=D\\  \omega(b),\omega(b')\le v_j}}\frac{D^{2\sigma}}{(bb')^\sigma}
= \sum_{\substack{d\mid N_j\\\gcd(d,aa')=1\\\omega(d)\le v_j\\}}\varphi_{2\sigma}(d)\sum_{\substack{b,b'|N_j\\\gcd(b,a)=\gcd(b',a')=1\\d|\gcd(b,b')\\  \omega(b),\omega(b')\le v_j}}\frac{1}{(bb')^\sigma}
\\
&=\sum_{\substack{d\mid N_j\\\gcd(d,aa')=1\\\omega(d)\le v_j\\}}\frac{\varphi_{2\sigma}(d)}{d^{2\sigma}}\sum_{\substack{B,B'|N_j\\\gcd(B,ad)=\gcd(B',a'd)=1\\  \omega(B),\omega(B')\le v_j-\omega(d)}}\frac{1}{(BB')^\sigma}.
\end{align*}
By the definition of {$\varphi_{2\sigma}(d)$}, for $d\mid N_j$ we have
\begin{align}
	\frac{\varphi_{2\sigma}(d)}{d^{2\sigma}}=\prod_{p\mid d}\left(1-\frac{1}{p^{2\sigma}}\right)
	\ge \prod_{p\mid N_j}\left(1-\frac{1}{p^{2\sigma}}\right)\gg1.\label{247}
\end{align}
Consequently, we derive that
$$\widetilde{S}_\sigma\gg \sum_{\substack{d\mid N_j\\\gcd(d,aa')=1\\\omega(d)\le v_j\\}}\sum_{\substack{B,B'|N_j\\\gcd(B,ad)=\gcd(B',a'd)=1\\  \omega(B),\omega(B')\le v_j-\omega(d)}}\frac{1}{(BB')^\sigma}.$$
Substitute this into (\ref{245}). For the sum over $a$ and $a'$, we follow a similar procedure and derive that
\begin{align}
	S_\sigma(\M_j)&\gg\sum_{\substack{c\mid N_j\\\omega(c)\le v_j}}\sum_{\substack{A,A'|N_j\\\gcd(A,c)=\gcd(A',c)=1\\\omega(A),\omega(A')\le v_j-\omega(c)}}\frac{1}{(AA')^\sigma}\sum_{\substack{d\mid N_j\\\gcd(d,AA'c)=1\\\omega(d)\le v_j\\}}\sum_{\substack{B,B'|N_j\\\gcd(B,Acd)=\gcd(B',A'cd)=1\\  \omega(B),\omega(B')\le v_j-\omega(d)}}\frac{1}{(BB')^\sigma}\nonumber\\
	&=\sum_{\substack{c\mid N_j\\\omega(c)\le v_j}}\sum_{\substack{d\mid N_j\\\gcd(d,c)=1\\\omega(d)\le v_j\\}}\sum_{\substack{A|N_j\\\gcd(A,cd)=1\\\omega(A)\le v_j-\omega(c)}}\frac{1}{A^\sigma}\sum_{\substack{A'|N_j\\\gcd(A',cd)=1\\\omega(A')\le v_j-\omega(c)}}\frac{1}{A'^\sigma}\sum_{\substack{B|N_j\\\gcd(B,Acd)=1\\  \omega(B)\le v_j-\omega(d)}}\frac{1}{B^\sigma}\sum_{\substack{B'|N_j\\\gcd(B',A'cd)=1\\  \omega(B')\le v_j-\omega(d)}}\frac{1}{B'^\sigma}.\label{246}
\end{align}
We calculate from inside successively. Since each term is positive, we can sift a suitable subset.
Therefore we restrict $c$, $d$ such that $\omega(c)=\omega(d)= v_j-u_j$, and $A$, $A'$ such that $\omega(A)=\omega(A')=u_j$.
Then the inner sum turns to
\begin{equation}\label{2410}
\sum_{\substack{B'|N_j\\\gcd(B',A'cd)=1\\  \omega(B')\le u_j}}\frac{1}{B'^\sigma}
\ge \frac{1}{u_j!} \bigg(\sum_{\substack{p\in I_j\\ p\nmid A'cd}} \frac{1}{p^\sigma}\bigg)^{u_j}
= \frac{1}{u_j!} \bigg(\sum_{p\in I_j} \frac{1}{p^\sigma}-\sum_{\substack{p\in I_j\\ p|A'cd}} \frac{1}{p^\sigma}\bigg)^{u_j}.
\end{equation}
For the factorial, we use Stirling's formula
$$u_j!=\exp\left(u_j\log u_j-u_j+O(\log u_j)\right)=\left(\{1+o(1)\}\frac{u_j}{\rm e}\right)^{u_j}.$$
For the sums, by Lemma \ref{l122} we have
$$\sum_{p\in I_j}\frac{1}{p^\sigma}=\{1+o(1)\}\frac{(f^{1-\sigma}-1)f^{j(1-\sigma)}(\log N)^{1-\sigma}}{(1-\sigma)(\log_2N)^\sigma}.$$
Since $\omega(A'cd)\le\omega(A')+\omega(c)+\omega(d)\le2v_j$, we have
$$
\sum_{\substack{p\in I_j\\ p|A'cd}} \frac{1}{p^\sigma}
\le \sum_{ p|A'cd}\frac{1}{p^\sigma}\le\frac{2v_j}{(f^j\log N\log_2N)^\sigma}
\le \frac{2\alpha(\log N)^{1-\sigma}}{j^2f^{j\sigma}\log J(\log_2N)^\sigma}
\ll \frac{1}{j^2f^j\log J}\sum_{p\in I_j}\frac{1}{p^\sigma}.
$$
Note that $J\to\infty$ as $\sigma\searrow1/2$. Therefore, in (\ref{2410}) we have
\begin{align*}
	\sum_{\substack{B'|N_j\\\gcd(B',A'cd)=1\\  \omega(B')\le u_j}}\frac{1}{B'^\sigma}\ge\left(\{1+o(1)\}\frac{{\rm e}(f^{1-\sigma}-1)jf^{j/2}\sqrt{\log J}}{{\eta}(1-\sigma)}\right)^{u_j}.
\end{align*}
We can play similar trick on sums over $B, A', A$ in \eqref{246} successively and therefore
\begin{align}
	S_\sigma(\M_j)\gg	\left(\{1+o(1)\}\frac{{\rm e}(f^{1-\sigma}-1)jf^{j/2}\sqrt {\log J}}{{\eta}(1-\sigma)}\right)^{4u_j}\sum_{\substack{c\mid N_j\\\omega(c)= v_j-u_j}}
	\sum_{\substack{d\mid N_j\\\gcd(d,c)=1\\\omega(d)= v_j-u_j\\}}1.
	\label{SMj}
\end{align}
Trivially, we have
\begin{align*}
	\sum_{\substack{c\mid N_j\\\omega(c)= v_j-u_j}}\sum_{\substack{d\mid N_j\\\gcd(d,c)=1\\\omega(d)= v_j-u_j\\}}1
   \ge\binom{P_j}{v_j-u_j}\binom{P_j-v_j+u_j}{v_j-u_j}
	\ge \binom{P_j}{v_j}\binom{P_j-v_j}{v_j}\left(\frac{v_j}{P_j}\right)^{2u_j}.
\end{align*}
Therefore, by (\ref{cardinalMj}) we have
\begin{align*}
\sum_{\substack{c\mid N_j\\ \omega(c)\le v_j-u_j}} \sum_{\substack{d\mid N_j\\\gcd(d,c)=1\\\omega(d)\le v_j-u_j\\}}1	
\ge \frac{|\M_j|}{4} \bigg(\frac{v_j}{P_j}\bigg)^{2u_j}
= \frac{|\M_j|}{4} \bigg(\{1+o(1)\}\frac{\sqrt\alpha}{j(f-1)^{1/2}f^{j/2}\sqrt{\log J}}\bigg)^{4u_j}.
\end{align*}
Combined with (\ref{SMj}), we obtain that
$$
\frac{S_\sigma(\M_j)}{|\M_j|}
\gg \left(\{1+o(1)\}\frac{{\rm e}\sqrt\alpha(f^{1-\sigma}-1)}{{\eta}(1-\sigma)\sqrt{f-1}}\right)^{4u_j}.$$
By the definition of $u_j$, we have
\begin{align*}
	\sum_{j\le J}u_j&=\frac{4{\eta}(\log N)^{1-\sigma}}{\sqrt{\log J}(\log_2N)^\sigma}\sum_{j\le J}\frac{1}{jf^{j(\sigma-1/2)}}+O(J)\\
	&=\{1+o(1)\}\frac{4{\eta}\sqrt{\log J}(\log N)^{1-\sigma}}{f^{J(\sigma-1/2)}(\log_2N)^\sigma}\\
	&=\{1+o(1)\}\frac{4{\eta}\sqrt{\lambda|\log(\sigma-1/2)|}(\log N)^{1-\sigma}}{f^{(\sigma-1/2)^{1-\lambda}}(\log_2N)^\sigma}.
\end{align*}
Therefore, by taking product over $j$, we obtain
\begin{align*}
\frac{S_\sigma(\M)}{|\M|}
\ge \exp\bigg(\{1+o(1)\} H(\alpha, {\eta}, f, \lambda) \sqrt{|\log(\sigma-1/2)|} \frac{(\log N)^{1-\sigma}}{(\log_2N)^\sigma}\bigg),
\end{align*}
where
$$
H(\alpha, {\eta}, f, \lambda)
:= \frac{4{\eta}\sqrt{\lambda}}{f^{(\sigma-1/2)^{1-\lambda}}}
\log\bigg(\frac{{\rm e}\sqrt\alpha(f^{1-\sigma}-1)}{{\eta}(1-\sigma)\sqrt{f-1}}\bigg).
$$
Note that (\ref{cardinalM}) implies we need to restrict $2\alpha\log f\le1$. To get large value of $H$, we set
$$
f\to1^+,
\qquad
2\alpha\log f\to1^-,
\qquad
{\eta}\to\sqrt2/2,
\qquad
\lambda\to1^-.
$$
Then $H(\alpha, {\eta}, f, \lambda)$ can be sufficiently close to $2\sqrt2$ and we finally derive that
$$
\Gamma_\sigma(N)
\ge \frac{S_\sigma(\M)}{|\M|}
\ge \exp\bigg(\big\{2\sqrt2+o(1)\big\}\sqrt{|\log(\sigma-1/2)|}\frac{(\log N)^{1-\sigma}}{(\log_2N)^\sigma}\bigg).
$$

\bigskip
\section{Convolution method: Proof of Theorem \ref{th3}}\label{sec5}

Let $\M$  be a set of positive integers with cardinal $|\M|=N=T^\kappa$ where $0<\kappa<1$ to be chosen. Define
$$
\M_j
:= \M\cap [(1+(\log T)/T)^j, (1+(\log T)/T)^{j+1}).
$$
For $\J := \{j\ge0:\M_j\neq\varnothing\}$,
let $$\M'=\{m_j=\min\M_j:j\in\J\}.$$
Then we define the resonator
$$
R(t) := \sum_{m\in\M'} r(m)m^{\i t},
$$
where $r(m_j)=|\M_j|^{1/2}.$ Trivially we have
$$
|R(t)|
\le R(0)
= \sum_{m\in\M'} r(m)
\le \Big(\sum_{m\in\M'}1\Big)^{1/2} \Big(\sum_{m\in\M'}r(m)^2\Big)^{1/2}
\le |\M'|^{1/2}|\M|^{1/2}\le N.
$$

{Let $0<\varepsilon<1$}, for $u\in\R$, we take
$$K(u):=\frac{\sin^2(\varepsilon u\log T)}{\pi u^2\varepsilon\log T},$$
which satisfies
\begin{align}\label{Khat}
\widehat K(\xi) = \max\bigg(1-\frac{|\xi|}{2{\varepsilon}\log T}, 0\bigg).
\end{align}
Write
$${\mathfrak Z}_\sigma(t,u):=\zeta(\sigma+\i t+\i u)\zeta(\sigma-\i t+\i u)K(u).$$
Then we define
\begin{align*}
M_1(R; T)
& := \int_{T^\beta\le|t|\le T}|R(t)|^2{\phi}\bigg(\frac{t\log T}{T}\bigg)\d t,
\\
M_2(R; T)
& := \int_{2T^\beta\le|t|\le T/2}|R(t)|^2{\phi}\bigg(\frac{t\log T}{T}\bigg)\int_{|u|\le|t|/2}{\mathfrak Z}_\sigma(t,u)\d u\d t.
\end{align*}
Since $K(\cdot)$ is bounded by $1$, clearly we have
\begin{align}
	Z_\beta(\sigma,T)^2\ge\frac{{|M_2(R; T)|}}{M_1(R; T)}.\label{zbetasigma}
\end{align}
As in \S \ref{sec3}, we approximate $M_1(R; T)$ and $M_2(R; T)$ by their relative full integral, i.e., by
\begin{align*}
I_1(R; T)
& :=\int_{\R}|R(t)|^2{\phi}\bigg(\frac{t\log T}{T}\bigg)\d t,
\\
I_2(R; T)
& := \int_{\R}|R(t)|^2{\phi}\bigg(\frac{t\log T}{T}\bigg)\int_{\R}{\mathfrak Z}_\sigma(t,u)\d u\d t.
\end{align*}

\begin{lemma}\label{LI'}
For $I_1(R; T)$, we have
$$
M_1(R; T)\le I_1(R; T)\ll T|\M|/\log T,
$$
{where the implied constant is absolute.}
\end{lemma}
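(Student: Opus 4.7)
The first inequality $M_1(R;T)\le I_1(R;T)$ is immediate: since $\phi(t)={\rm e}^{-t^2}\ge 0$ and $|R(t)|^2\ge 0$, enlarging the integration range from $T^{\beta}\le|t|\le T$ to all of $\R$ only increases the integral. Hence the only real content is the bound $I_1(R;T)\ll T|\M|/\log T$.

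The plan for the upper bound is a standard resonator diagonalization. First I would expand the square, obtaining
$$
I_1(R; T) = \sum_{m,n\in\M'} r(m)r(n)\int_{\R} (m/n)^{{\rm i}t}\phi\bigg(\frac{t\log T}{T}\bigg) {\rm d}t.
$$
After the substitution $u=t\log T/T$, the inner integral becomes $(T/\log T)\,\widehat{\phi}\bigl((T/\log T)\log(m/n)\bigr)$, and using the fact that $\phi$ is its own Fourier transform up to the factor $\sqrt{2\pi}$, this reduces to
$$
I_1(R; T) = \frac{\sqrt{2\pi}\,T}{\log T}\sum_{m,n\in\M'} r(m)r(n)\,\phi\!\left(\frac{T}{\log T}\log\frac{m}{n}\right).
$$
All terms are non-negative, so it suffices to bound the double sum by $|\M|$.

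Now I would exploit the construction of $\M'$: for $m_j,m_k\in\M'$ with $j,k\in\J$, one has $m_j/m_k\in[(1+(\log T)/T)^{j-k-1},(1+(\log T)/T)^{j-k+1})$, so $(T/\log T)\log(m_j/m_k) = (j-k)+O(1)$. Consequently the Gaussian weight satisfies $\phi((T/\log T)\log(m_j/m_k))\ll {\rm e}^{-(j-k)^2}$. Combined with $r(m_j)r(m_k)=|\M_j|^{1/2}|\M_k|^{1/2}\le \tfrac12(|\M_j|+|\M_k|)$ via AM--GM, the double sum is bounded by
$$
\sum_{j,k\in\J}|\M_j|\,{\rm e}^{-(j-k)^2}\ll \sum_{j\in\J}|\M_j|\sum_{k\in\Z}{\rm e}^{-(j-k)^2}\ll\sum_{j\in\J}|\M_j|=|\M|,
$$
since the inner Gaussian series converges absolutely to a constant. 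This yields $I_1(R;T)\ll T|\M|/\log T$ with an absolute implied constant.

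There is no genuine obstacle here; the only thing to be careful about is the error in approximating $\log(m_j/m_k)$ by $(j-k)\log T/T$, which is bounded uniformly in $T$ because $\log(1+(\log T)/T)=(\log T)/T+O((\log T/T)^2)$. The rapid decay of the Gaussian absorbs this perturbation and also makes the sum over $k$ convergent.
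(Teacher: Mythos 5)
Your proof is correct and follows essentially the same route as the paper: trivial first inequality, Fourier-expand $I_1$ into a double sum over $\M'$, bound the diagonal by $|\M|$, and use the rapid decay of the Gaussian (driven by the fact that distinct $m_j,m_k\in\M'$ differ by roughly $|j-k|$ steps of size $\log T/T$ in $\log$) together with AM--GM to show the off-diagonal contribution is also $\ll|\M|$. Your AM--GM step actually fills in a detail that the paper's own proof glosses over when it drops the decay factor and asserts $\sum_{l\ge1}\sum_{|i-j|=l}r(m_i)r(m_j)\le\sum_i r(m_i)^2$, which as written is not literally true without retaining the decay; your version makes the argument airtight.
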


\begin{proof}
	The first inequality is trivial. Further we have
	\begin{align*}
		I_1(R; T)
		=\frac{T}{\log T}\sum_{i,j\in\J}r(m_i)r(m_j)\widehat\phi\bigg(\frac{T}{\log T}\log\frac{m_j}{m_i}\bigg).
	\end{align*}
	In the sum, the diagonal terms contribute
	\begin{align}
	\widehat\phi(0)\sum_{i\in\J}r(m_i)^2=\widehat\phi(0)\sum_{i\in\J}|\M_i|=\widehat\phi(0)|\M'|\le\widehat\phi(0)|\M|.\label{621}
	\end{align}
	For the off-diagonal terms, we divide the sum according to the values of $|i-j|$ and have
	\begin{align*}
\sum_{\substack{i,j\in\J\\ i\neq j}} r(m_i)r(m_j) \widehat\phi\bigg(\frac{T}{\log T}\log\frac{m_j}{m_i}\bigg)
& = \sum_{l\ge1}\sum_{\substack{i,j\in\J\\ |i-j|=l}} r(m_i)r(m_j) \widehat\phi\bigg(\frac{T}{\log T}\log\frac{m_j}{m_i}\bigg)
\\
& \le \sum_{l\ge1} \sum_{\substack{i,j\in\J\\ |i-j|=l}} r(m_i)r(m_j)
\widehat\phi\bigg(\frac{T}{\log T}\log\Big(1+\frac{\log T}{T}\Big)^{l-1}\bigg).
	\end{align*}
	Recall that $\phi$ is rapidly decay. Thus this is bounded by
\begin{align*}
		\sum_{l\ge1}\sum_{\substack{i,j\in\J\\ |i-j|=l}} r(m_i)r(m_j)\le\sum_{i\in\J}r(m_i)^2\le|\M|.
	\end{align*}
 This combined with (\ref{621}) proves the lemma.
\end{proof}

\begin{lemma}\label{LI}
	For $I_2(R; T)$, we have
\begin{align*}
	I_2(R; T)=M_2(R; T)+O(|\M|T^{\beta+\kappa}\log T),
\end{align*}
{where the implied constant is absolute.}
\end{lemma}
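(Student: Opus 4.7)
The plan is to decompose $I_2(R;T)-M_2(R;T)$ as the integral of $|R(t)|^2\phi(t\log T/T)\mathfrak{Z}_\sigma(t,u)$ over the complement (in the variables $(t,u)$) of the integration domain of $M_2$, namely
$$\text{(A)}\ |t|<2T^\beta,\qquad \text{(B)}\ |t|>T/2,\qquad \text{(C)}\ 2T^\beta\le|t|\le T/2\text{ and }|u|>|t|/2,$$
and to bound each piece separately. Throughout, I use the pointwise bound $|R(t)|\le N$ noted just before Lemma \ref{LI} and $0\le\phi\le1$.

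Piece (B) is negligible: for $|t|\ge T/2$, $\phi(t\log T/T)\le e^{-(\log T)^2/4}$, which beats any polynomial factor coming from $R$, from $K(u)\le\varepsilon\log T/\pi$, and from the convexity bound $|\zeta(\sigma+\i\tau)|\ll(1+|\tau|)^{(1-\sigma)/2+\eta}$, so this contribution is super-polynomially small in $T$.

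The key tools for pieces (A) and (C) are the Fej\'er-tail bound $|K(u)|\ll 1/(u^2\varepsilon\log T)$, the pointwise AM--GM estimate
$$2|\zeta(\sigma+\i(t+u))\zeta(\sigma+\i(u-t))|\le|\zeta(\sigma+\i(t+u))|^2+|\zeta(\sigma+\i(u-t))|^2,$$
and the classical mean-square bound
$$\int_V^{V+H}|\zeta(\sigma+\i v)|^2\,\d v\ll H\,\zeta(2\sigma),$$
uniform for $\sigma>1/2$ and $H\gg 1$, where $\zeta(2\sigma)\ll\log_2T$ in our range $\sigma\ge 1/2+1/\log_2T$. For piece (A), Fubini and a split of the $u$-integral at $|u|\asymp T^{10}$ (using the convexity bound beyond) yield
$$\int_{|t|<2T^\beta}\bigg|\int_\R\mathfrak{Z}_\sigma(t,u)\,\d u\bigg|\,\d t\ll T^\beta\log_2T,$$
so piece (A) contributes $\ll N^2T^\beta\log_2T$. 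For piece (C), a dyadic decomposition of $\{|u|>|t|/2\}$ combined with the mean-square bound gives an inner $u$-integral of size $\ll\zeta(2\sigma)/(|t|\log T)$; integrating over $|t|\in[2T^\beta,T/2]$ and multiplying by $|R|^2\le N^2$ yields $\ll N^2\log_2T/\log T$, a fortiori admissible.

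The main obstacle is tracking the $\sigma$-dependence of the mean-square estimate: the constant $\zeta(2\sigma)\sim 1/(2\sigma-1)$ blows up as $\sigma\searrow 1/2$, and the hypothesis $\sigma\ge 1/2+1/\log_2T$ in Theorem \ref{th3} is precisely what allows this factor to be absorbed into the extra $\log T$ on the right-hand side of the target $O(|\M|T^{\beta+\kappa}\log T)$. Once this uniformity is in place, the three pieces fit together to give the claim.
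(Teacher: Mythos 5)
Your three-region decomposition of $I_2-M_2$ (small $|t|$, large $|t|$, intermediate $|t|$ with $|u|>|t|/2$) is exactly the one in the paper, and your treatment of the large-$|t|$ piece via the rapid decay of $\phi$ agrees with it. The substantive difference is in how you control the $\zeta$ factors: you rely on AM--GM together with a short-interval mean-square estimate, whereas the paper uses only the pointwise subconvexity bound $|\zeta(\sigma+\i t)|\ll(1+|t|)^{(1-\sigma)/3}$ and the kernel tail $K(u)\ll 1/u^2$, which sidesteps any averaging over $\zeta$ entirely.

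The gap is that the mean-square estimate you invoke,
$\int_V^{V+H}|\zeta(\sigma+\i v)|^2\d v\ll H\,\zeta(2\sigma)$ uniformly in $V$ for $H\gg1$, is not a classical result: the classical asymptotic is for $\int_0^T$, and a genuine short-interval bound for $1\le H\ll V$ does not hold in this clean form (what one gets, e.g.\ from Montgomery--Vaughan applied to the approximate functional equation, carries an extra term of order $V^{2-2\sigma}\zeta(2\sigma)$). For piece (C) this is harmless: your dyadic scales are $\asymp 2^k|t|$, comparable to the height, so the bound follows by subtracting two full-range integrals. But for piece (A), after the change of variables $v=t\pm u$, the $t$-integration produces intervals of length $\asymp T^\beta$ sitting at heights $\asymp|u|$ which can be much larger, and these are genuinely short. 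What is actually available there is the trivial monotonicity bound
$\int_{|v-u|\le 2T^\beta}|\zeta(\sigma+\i v)|^2\d v\le\int_0^{|u|+2T^\beta}|\zeta(\sigma+\i v)|^2\d v\ll(|u|+T^\beta)\zeta(2\sigma)$,
which, combined with $K(u)\ll 1/(u^2\log T)$ in the range $T^\beta\ll|u|\le T^{10}$, still yields your claimed $T^\beta\log_2T$. So the argument is salvageable, and it is correct that the hypothesis $\sigma\ge 1/2+1/\log_2T$ is what keeps $\zeta(2\sigma)\ll\log_2T$ under control; but as written you assert an estimate that is not at your disposal, making the route noticeably more delicate than the paper's purely pointwise one.
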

\begin{proof}
By the definition, we have that
\begin{align*}
&I_2(R; T)-M_2(R; T)\\
&\ \ =\left(\int_{|t|<2T^\beta}\int_{\R}
+\int_{2T^\beta\le|t|\le T/2}\int_{|u|>|t|/2}
+\int_{|t|>T/2}\int_{\R}\right)|R(t)|^2{\phi}\bigg(\frac{t\log T}{T}\bigg){\mathfrak Z}_\sigma(t,u)\d u\d t.
\end{align*}
Denote the three integrals on the right-hand side by $D_1(R,T)$, $D_2(R,T)$ and $D_3(R,T)$ respectively. We prove that each is bounded by $O(|\M|T^{\beta+\kappa}\log T)$.

Firstly, recall that
$$
|\zeta(\sigma+\i t)|\ll (1+|t|)^{\frac{1}{3}(1-\sigma)}.
$$
Therefore, we have
\begin{align*}
	\int_{|t|<2T^\beta}\int_{|u|\le T^\beta}|{\mathfrak Z}_\sigma(t,u)|\d u\d t
    \le \int_{|t|<2T^\beta}\int_{|u|\le T^\beta}|\zeta(\sigma+\i t)|^2\d u\d t
    \ll T^\beta\log T,
\end{align*}
and
\begin{align*}
	\int_{|t|<2T^\beta}\int_{|u|>T^\beta}|{\mathfrak Z}_\sigma(t,u)|\d u\d t
    \le \int_{|t|<2T^\beta}\int_{|u|>T^\beta}\frac{(|t|+|u|)^{\frac{1}{3}(1-\sigma)}}{u^2}\d u\d t
    \ll T^\beta.
\end{align*}
Consequently, we derive that
\begin{align*}
	D_1(R,T)\le N^2\int_{|t|<2T^\beta}\left(\int_{|u|\le T^\beta}+\int_{|u|>T^\beta}\right)|{\mathfrak Z}_\sigma(t,u)|\d u\d t\ll|\M|T^{\beta+\kappa}\log T.
\end{align*}
Secondly, we have that $D_2(R,T)$ is bounded by
\begin{align*}
	\int_{T^\beta\le|t|\le T}|R(t)|^2{\phi}\bigg(\frac{t\log T}{T}\bigg)\int_{|u|>|t|/2}\frac{(|t|+|u|)^{\frac{1}{3}(1-\sigma)}}{u^2}\d u\d t\le T^{-\beta/2}I_1(R; T),
\end{align*}
and thus is admissible by Lemma \ref{LI'}.
Finally, since $\phi$ decays rapidly, we have
\begin{align*}
	D_3(R,T)\ll N^2=|\M|T^\kappa.	
\end{align*}
This completes the proof.
\end{proof}

Using Lemma \ref{LI'} and Lemma \ref{LI}, we establish from \eqref{zbetasigma} that
\begin{align}
	\frac{{|M_2(R; T)|}}{M_1(R; T)}\gg\frac{{|I_2(R; T)|}}{T|\M|}\log T+O\left(T^{\beta+\kappa-1}(\log T)^2\right).\label{523}
\end{align}
To estimate $I_2(R; T)$, we need to deal with the convolution of $K(u)$ with $\zeta(s)$.
Here we quote the following lemma due to de la Bret\`ech{e} and Tenenbaum \cite[Lemma 5.3]{DT18}.
\begin{lemma}\label{l522}
	Let $\sigma\in(-\infty,1)$. Suppose $K(z)$ is analytic in the strip $\im z\in [\sigma-2,0]$, satisfying
	$$\sup_{\sigma-2\le y\le0}|K(x+\i y)|\ll \frac{1}{1+x^2}.$$
	Then for any $t\neq0$ we have
	\begin{align*}
		\int_{\R}{\mathfrak Z}_\sigma(t,u)\d u
=\sum_{k,l\ge1}\frac{\widehat K(\log kl)}{k^{\sigma+\i t}l^{\sigma-\i t}}-2\pi\zeta(1-2\i t)K(\i(\sigma+\i t)-\i)-2\pi\zeta(1+2\i t)K(\i(\sigma-\i t)-\i).\end{align*}
\end{lemma}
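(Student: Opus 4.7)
The plan is to prove the identity by a straight contour-shift argument in the variable $u$. I would start by viewing
$$
I(t) := \int_{\R} {\mathfrak Z}_\sigma(t,u)\, \d u
     = \int_{\im u = 0} \zeta(\sigma+\i t+\i u)\zeta(\sigma-\i t+\i u) K(u)\, \d u
$$
and shifting the contour downward to the horizontal line $\im u = \sigma-2$. The hypothesis $|K(x+\i y)|\ll 1/(1+x^2)$ uniformly in the strip, together with standard convexity bounds for $\zeta$ in vertical strips, will kill the vertical segments as $|\re u|\to\infty$, so the shift is legitimate.

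Inside the strip $\sigma-2 \le \im u \le 0$ the only singularities of the integrand come from the simple poles of the two zeta factors, located where $\sigma+\i t+\i u = 1$ and $\sigma-\i t+\i u = 1$, i.e.\ at $u_1 = -t + \i(\sigma-1)$ and $u_2 = t + \i(\sigma-1)$, which both lie strictly between the two horizontal lines. The residue of the integrand at $u_1$ equals $\zeta(1-2\i t)\,K(u_1)/\i$ and at $u_2$ equals $\zeta(1+2\i t)\,K(u_2)/\i$; noting $K(u_1)=K(\i(\sigma+\i t)-\i)$ and $K(u_2)=K(\i(\sigma-\i t)-\i)$, the Cauchy residue theorem will then give
$$
I(t) \;=\; I_{\mathrm{shift}}(t) \;-\; 2\pi\,\zeta(1-2\i t)K(\i(\sigma+\i t)-\i) \;-\; 2\pi\,\zeta(1+2\i t)K(\i(\sigma-\i t)-\i),
$$
where $I_{\mathrm{shift}}(t)$ denotes the integral along $\im u = \sigma-2$.

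Next I would handle $I_{\mathrm{shift}}(t)$ by Dirichlet-series expansion. On the line $\im u = \sigma-2$, both arguments of $\zeta$ have real part $2$, so $\zeta(\sigma+\i t+\i u)\zeta(\sigma-\i t+\i u)=\sum_{k,l\ge 1}(kl)^{-\i u} k^{-\sigma-\i t}l^{-\sigma+\i t}$ converges absolutely, uniformly on the contour, and the $1/(1+x^2)$-decay of $K$ ensures that one may interchange the sum and the integral. This reduces $I_{\mathrm{shift}}(t)$ to a sum of integrals of the form $\int_{\im u = \sigma-2} K(u)(kl)^{-\i u}\d u$. Since $K(u)(kl)^{-\i u}$ is analytic and rapidly decaying in the strip, a second contour shift back to the real axis converts each such integral into $\int_\R K(v) e^{-\i v\log(kl)}\,\d v = \widehat K(\log kl)$, yielding the desired main term.

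The principal obstacles are the two analytic-justification steps: first, bounding the vertical segments in the initial shift (where the zeta product grows polynomially while $K$ decays only like $1/x^2$, so one must check that the product is $o(1)$ uniformly in $y$); and second, verifying that the double series $\sum_{k,l}(kl)^{-\i u}k^{-\sigma-\i t}l^{-\sigma+\i t}$ can be integrated term-by-term against $K$ on the shifted line. Both are routine consequences of the hypothesized decay of $K$ combined with the convexity estimate $\zeta(\tau+\i\xi)\ll (1+|\xi|)^{\max(0,(1-\tau)/2)+\varepsilon}$, but they are the only nontrivial analytic input; everything else is residue bookkeeping and the elementary identification of the Fourier integral $\int_\R K(v)e^{-\i v\log(kl)}\d v$ with $\widehat K(\log kl)$.
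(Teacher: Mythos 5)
The paper does not actually prove this lemma: it is quoted verbatim from de la Bret\`eche and Tenenbaum (their Lemma 5.3 in \cite{DT18}), so there is no in-paper argument to compare against. Judged on its own, your contour-shift proof is correct and is the natural route. A few confirmations of the details you left implicit: writing $u=x+\i(\sigma-2)$ on the shifted line gives both $\zeta$-arguments real part exactly $2$, so absolute convergence of the double Dirichlet series and the Fubini interchange are immediate; the poles occur at $u_1=-t+\i(\sigma-1)$ and $u_2=t+\i(\sigma-1)$, whose imaginary part $\sigma-1$ lies strictly inside $(\sigma-2,0)$ because $\sigma<1$, and they are distinct precisely because $t\neq0$ (which is why that hypothesis appears); the residue of $\zeta(\sigma+\i t+\i u)$ in the variable $u$ is $1/\i$, which with the clockwise orientation of the rectangle produces the stated $-2\pi\zeta(1\mp2\i t)K(\cdot)$ terms with the correct sign; and on the strip $(kl)^{-\i u}=\me^{-\i x\log kl}(kl)^{y}$ with $y\le 0$, so $|(kl)^{-\i u}|\le 1$ and the decay of $K$ lets you shift each term back to $\R$ to recover $\widehat K(\log kl)$ under the paper's convention $\widehat K(\xi)=\int_\R K(v)\me^{-\i v\xi}\d v$. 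The vertical segments vanish because on $\re u=\pm R$ the product of the two $\zeta$-factors is at most $O(R^{1+\varepsilon})$ by convexity while $|K|\ll R^{-2}$ and the segment has bounded length $2-\sigma$. So the argument is sound; it just reproves a cited result rather than filling a gap in the paper.
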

{We expand $K(u)$ analytic continuously to the whole plane, satisfying the conditions of Lemma \ref{l522}.} Therefore, we may deduce that
$$
I_2(R; T)=I_{2,1}(R; T)-I_{2,2}(R; T)-I_{2,3}(R; T),
$$
where
\begin{align*}
I_{2,1}(R; T)
& := \int_{\R}|R(t)|^2{\phi}\bigg(\frac{t\log T}{T}\bigg)
\sum_{k,l\ge1}\frac{\widehat K(\log kl)}{k^{\sigma+\i t}l^{\sigma-\i t}}\d t,
\\
I_{2,2}(R; T)
& := 2\pi\int_{\R}\zeta(1-2\i t)K(\i(\sigma+\i t)-\i)|R(t)|^2{\phi}\bigg(\frac{t\log T}{T}\bigg)\d t,
\\
I_{2,3}(R; T)
& := 2\pi\int_{\R}\zeta(1+2\i t)K(\i(\sigma-\i t)-\i)|R(t)|^2{\phi}\bigg(\frac{t\log T}{T}\bigg)\d t.
\end{align*}

For $I_{2,2}(R; T)$ and $I_{2,3}(R; T)$, we have
\begin{align}\label{524}
|I_{2,2}(R; T)| + |I_{2,3}(R; T)|
\ll \frac{|\M|T^\kappa}{\log T}\int_{\R}\frac{|\zeta(1\pm2\i t)|}{1+t^2}{\phi}\bigg(\frac{t\log T}{T}\bigg)\d t
\ll \frac{|\M|T^\kappa}{\log T},
\end{align}
since $K(\i(\sigma\pm\i t)-\i)\ll{1}/{((1+t^2)\log T)}$.
Thus $I_{{2,2}}(R,T)$ and $I_{{2,3}}(R,T)$ are admissible as error terms.

For $I_{{2,1}}(R; T)$, each term is nonnegative, so we have
\begin{align*}
I_{{2,1}}(R; T)
& = \frac{T}{\log T}\sum_{m,n\in\M'}r(m)r(n)\sum_{k,l\ge1}\frac{\widehat K(\log kl)}{k^{\sigma}l^{\sigma}}
\widehat{\phi}\bigg(\frac{T}{\log T}\log\frac{mk}{nl}\bigg)
\\
& \ge \frac{T}{\log T}\sum_{m,n\in\M'} r(m)r(n) \sum_{kl\le T^\varepsilon} \frac{\widehat K(\log kl)}{k^{\sigma}l^{\sigma}}\widehat{\phi}\bigg(\frac{T}{\log T}\log\frac{mk}{nl}\bigg).
\end{align*}
Note that by (\ref{Khat}), $kl\le T^\varepsilon$ implies $\widehat K(\log kl)\ge1/2$. Therefore
\begin{align*}
	I_{{2,1}}(R; T)\ge\frac{T}{2\log T}\sum_{kl\le T^\varepsilon}\frac{1}{k^{\sigma}l^{\sigma}}\sum_{m,n\in\M'}r(m)r(n)\widehat{\phi}\bigg(\frac{T}{\log T}\log\frac{mk}{nl}\bigg).
\end{align*}
For the inner sum, for fixed $k$ and $l$,  we have
\begin{align*}
\sum_{m,n\in\M'} r(m)r(n) \widehat{\phi}\bigg(\frac{T}{\log T}\log\frac{mk}{nl}\bigg)
& \ge\sum_{i,j\in\J} \sum_{\substack{m\in \M_i,n\in \M_j\\ mk=nl}}
\widehat{\phi}\bigg(\frac{T}{\log T}\log\frac{m_ik}{m_jl}\bigg)
\\
& = \sum_{i,j\in\J} \sum_{\substack{m\in \M_i,n\in \M_j\\ mk=nl}}
\widehat{\phi}\bigg(\frac{T}{\log T}\log\frac{m_in}{m_jm}\bigg)
\gg \sum_{\substack{m,n\in \M\\ mk=nl}} 1,
\end{align*}
since $1\le m/m_i\le1+\log T/T$ and $1\le n/m_j\le1+\log T/T$.
Thus we have
\begin{align}\label{I1RT}
I_{{2,1}}(R; T)
\gg\frac{T}{\log T}\sum_{\substack{m,n\in \M\\ mk=nl}} \sum_{kl\le T^\varepsilon}\frac{1}{k^{\sigma}l^{\sigma}}.
\end{align}
Write $m=m'\gcd(m,n)$ and $n=n'\gcd(m,n)$.
Then the relation $mk=nl$ implies
$$L=\frac{k}{n'}=\frac{l}{m'},$$
for some integer $L$ and
$$kl=L^2m'n'=L^2\frac{\lcm(m,n)}{\gcd(m,n)}.$$
Therefore in (\ref{I1RT}), we have
\begin{align*}
I_{{2,1}}(R; T)
& \gg\frac{T}{\log T}\sum_{m,n\in \M}\sum_{L^2\frac{\lcm(m,n)}{\gcd(m,n)}\le T^\varepsilon} \frac{1}{L^{2\sigma}}\bigg(\frac{\gcd(m,n)}{\lcm(m,n)}\bigg)^{\sigma}
\\
& \gg \frac{T}{\log T}\sum_{\substack{m,n\in \M\\ \frac{\lcm(m,n)}{\gcd(m,n)}\le T^\varepsilon}}
\bigg(\frac{\gcd(m,n)}{\lcm(m,n)}\bigg)^{\sigma}
\\
& = \frac{T}{\log T}S_\sigma(\M)
- \frac{T}{\log T}\sum_{\substack{m,n\in \M\\ \frac{\lcm(m,n)}{\gcd(m,n)}> T^\varepsilon}}
\bigg(\frac{\gcd(m,n)}{\lcm(m,n)}\bigg)^{\sigma}.
\end{align*}
By Rankin's trick, we have
\begin{align*}
\sum_{\substack{m,n\in \M\\ \frac{\lcm(m,n)}{\gcd(m,n)}> T^\varepsilon}} \bigg(\frac{\gcd(m,n)}{\lcm(m,n)}\bigg)^{\sigma}
\le \frac{1}{ T^{(\sigma-1/2)\varepsilon}} \sum_{m,n\in \M}\bigg(\frac{\gcd(m,n)}{\lcm(m,n)}\bigg)^{1/2}
= \frac{1}{ T^{(\sigma-1/2)\varepsilon}}S_{1/2}(\M).
\end{align*}
By using \eqref{gamma12}, we have that this is bounded by
\begin{align*}
\frac{|\M|}{ T^{(\sigma-1/2)\varepsilon}}\exp\Bigg(\big\{2\sqrt2+o(1)\big\}\sqrt{\frac{\kappa\log T\log_3T}{\log_2T}}\Bigg)
= o(|\M|),
\end{align*}
{since $\sigma\ge1/2+1/\log_2T$ and we can choose $\varepsilon=1/2022$.}
So we have
$$I_{{2,1}}(R; T)\gg\frac{T}{\log T}S_\sigma(\M).$$
Combining this with (\ref{523}) and (\ref{524}), we have
$$\frac{{|M_2(R; T)|}}{M_1(R; T)}\gg\frac{S_\sigma(\M)}{|\M|}+T^{\beta+\kappa-1}(\log T)^2.$$
Choose $\kappa=1-\beta$. By (\ref{zbetasigma}) we have
$$Z_\beta(\sigma,T)^2\ge\frac{{|M_2(R; T)|}}{M_1(R; T)}\gg\frac{S_\sigma(\M)}{|\M|}+(\log T)^2,$$
and thus by taking maximum of both sides over $|\M|=N$ we have
$$
Z_\beta(\sigma,T)\gg\sqrt{\Gamma_\sigma(T^{1-\beta})}.
$$

\vskip 5mm

\noindent{\bf Acknowledgement}.
The authors would like to thank professor Jie Wu, for his suggestion on exploring this subject and his generous help in overcoming some difficulties. They also thank Marc Munsch for pointing to his work \cite{Ai2019}, and Junxian Li and Jing Zhao for some valuable discussions. The authors are grateful to the China Scholarship Council (CSC) for supporting their studies in France. Bin Wei was also funded by
NSFC (Grant No. 11701412).

\bigskip

\end{document}